\documentclass[12pt, reqno, twoside, a4paper]{amsart}
\usepackage{bardestani-freiberg-mertens-theorem}
\usepackage{bardestani-freiberg-mertens-theorem-notation}

\title[Mertens's theorem for splitting primes and more]
      {Mertens's theorem for splitting primes and more
       \\ \vspace{0.5em}
           \tiny (A concise and adaptable proof of Hardy)}
\author{Mohammad Bardestani}
\address{Mohammad Bardestani, Department of Mathematics and Statistics, University of Ottawa, 585 King Edward, Ottawa, ON K1N
6N5, Canada.}
\email{mbardest@uottawa.ca}
\author{Tristan Freiberg}
\address{Tristan Freiberg,Mathematics Department,
202 Mathematical Sciences Bldg,
University of Missouri,
Columbia, MO 65211 USA.}
\email{freibergt@missouri.edu}

\date{\today}

\begin{document}

\begin{abstract}
Myriad articles are devoted to Mertens's theorem.
In yet another, we merely wish to draw attention to a proof by 
Hardy, which uses a Tauberian theorem of Landau that ``leads 
to the conclusion in a direct and elegant manner''.
Hardy's proof is also quite adaptable, and it is readily combined 
with well-known results from prime number theory. 
We demonstrate this by proving a version of the theorem for primes 
in arithmetic progressions with uniformity in the modulus, as well 
as a non-abelian analogue of this. 
\end{abstract}

\maketitle
\tableofcontents

\section{Introduction}
 \label{sec:1}

In 1873, Mertens \cite{Mertens1874} proved that the difference
$
 \sum_{p \le x} p^{-1} - \log\log x
$
tends to a limit, $g = 0.26149\ldots$, as $x$ tends to infinity.
Actually, Mertens proved a somewhat sharper result than this by 
using an upper bound for $\pi(x)$ that had been established over 
20 years earlier by Chebyshev \cite{vcebyvsev1850memoire}.
(Cf.~\S\ref{sec:2} for notation.)
In the same work, Mertens extended his result to primes in 
arithmetic progressions: if $(q,a) = 1$ then 
\begin{align}
 \label{eq:1.1}
\sums[p \le x][p \equiv a \bmod q] p^{-1} 
 = \phi(q)^{-1}\log\log x + g(q,a) + \Oh[(\log x)^{-1}],
\end{align}
where $g(q,a)$ is a number depending on $q$ and $a$.
Naturally, Mertens borrowed from Dirichlet 
\cite{dirichlet1839beweis}, who in 1837 proved that, at least in 
the weak sense of analytic density, the primes are evenly 
distributed among the $\phi(q)$ possible arithmetic progressions to 
the modulus $q$.
(Dirichlet proved that as $x$ tends to infinity, the sum on the 
left-hand side of \eqref{eq:1.1} tends to 
$\phi(q)^{-1}\log\log x$.)

Mertens's extension of his result to arithmetic progressions is 
comparatively seldom used or cited.
It is referenced and proved by Landau in his 1909 {\em Handbuch} 
\cite[\S7, \S110]{MR0068565}.
In 1972, Williams \cite{MR0364137} gave a proof of \eqref{eq:1.1} 
in its product form, in which a form of the quantity corresponding 
to $g(q,a)$, which we will denote by $G(q,a)$, is given explicitly.
In 1975, Norton \cite{MR0419382} investigated a distribution 
related to the number of primes from an arbitrary set that 
divide an integer, and, in connection with the special case where 
the set is an arithmetic progression, the quantity $g(q,a)$. 
Bounds for $g(q,a)$, or more precisely, uniform bounds for
$\sum_{\text{$p \le x$, $p \equiv a \bmod q$}} p^{-1} 
 - \phi(q)^{-1}\log\log x$, 
were used by Rieger \cite{MR0319882} in 1972, and by Pomerance 
\cite{MR0447087, MR618552} in 1976/1980, to prove results on 
amicable numbers.
Much more recently, Languasco and Zaccagnini, in a series of papers 
starting with \cite{MR2351662}, investigated $g(q,a)$ and $G(q,a)$ 
extensively. 

The work of Languasco and Zaccagnini has the merit of being built 
up from ``first principles'' as regards zeros of Dirichlet 
$L$-functions.
They use the strongest results in this direction, including 
Siegel's theorem, to prove versions of \eqref{eq:1.1} that hold 
with uniformity in $q$. 
We will give a proof covering some of their results which, though 
requiring a Tauberian theorem of Landau, is easy to remember.
In the penultimate step, we obtain an exact expression into which 
one can simply ``plug'' familiar results or conjectures on the 
error term in the prime number theorem for arithmetic progressions, 
including conjectures that go beyond the generalized Riemann 
hypothesis for Dirichlet $L$-functions.

The proof is due to Hardy \cite{MR1574590}. 
Stating that the proof of Mertens's theorem in Landau's 
{\em Handbuch} is ``difficult to seize or to remember'', Hardy 
offered a proof that uses a ``well-known'' Tauberian theorem of 
Landau that ``leads to the conclusion in a direct and elegant 
manner''.
Hardy was not concerned with arithmetic progressions but his proof 
easily generalizes.
In fact, Hardy's proof is quite adaptable, and to illustrate this 
we will adapt it to the situation of primes whose Frobenius lies in 
a conjugacy class associated with a Galois number field. 

Primes that satisfy linear congruence conditions can be 
characterized in terms of their splitting behavior in an 
appropriate abelian number field, and {\em vice-versa}.
For instance, $p \equiv 1 \bmod q$ if and only of $p$ splits 
completely in the $q$-th cyclotomic field.
Results such as Mertens's theorem for arithmetic progressions are, 
in this sense, abelian, and one is often interested in their 
{\em non-abelian} analogues.
Hence we consider Mertens's theorem for primes that split 
completely in a general Galois number field, and so on.

We hope that this presentation will be useful to any reader who 
requires a version of Mertens's theorem for some other situation 
that is amenable to Hardy's proof.

This article is divided into two independent parts.
In Part I (\S\S\ref{sec:2}--\ref{sec:6}) we consider primes in 
arithmetic progressions, and in Part II 
(\S\S\ref{sec:7}--\ref{sec:10}) we consider primes whose Frobenius 
lies in a conjugacy class of automorphisms of a Galois number 
field.
The key propositions of each part are stated in \S\ref{sec:3} and 
\S\ref{sec:8}, their proofs relegated to \S\ref{sec:4} and 
\S\ref{sec:9}. 
In \S\ref{sec:5} and \S\ref{sec:10} we investigate the constants 
that arise ($g(q,a)$, $G(q,a)$ and their analogues).

\part*{%
\hspace*{\fill}%
I. PRIMES IN ARITHMETIC PROGRESSIONS%
\hspace*{\fill}%
}
\label{part:I}

\section{Background, notation and conventions}
 \label{sec:2}

We assume the reader is familiar with some standard results 
concerning the distribution of primes, the distribution of primes 
in arithmetic progressions, the Riemann zeta-function, Dirichlet 
characters and Dirichlet $L$-functions, such as can be found in 
\cite{MR1790423} or \cite{MR0417077}.

\begin{itemize}
  \item[] $p$ --- a prime number.
  \item[] $a,q$ --- positive integers, $a$ being coprime with $q$ 
          (written $(q,a) = 1$), and satisfying $a \le q$.
  \item[] $m,n,\nu$ --- positive integers.
  \item[] $t,x,y,\sigma,\tau$ --- real parameters.
          Unless stated otherwise, $t,x,y \ge 2$.
  \item[] $\epsilon, A$ --- real, positive numbers, where 
          $\epsilon$ can be arbitrarily small (but fixed), and $A$ 
          can be arbitrarily large (but fixed).
  \item[] $s$ --- a complex number with real part $\sigma$ and 
          imaginary part $\tau$, unless specified.
          The region denoted by
$
 \sigma \ge 1 - \frac{c}{\log(|\tau| + 3)}  
$
is to be understood as the region 
$
 \{s = \sigma + i\tau : 
  \sigma \ge 1 - {\textstyle\frac{c}{\log(|\tau| + 3)}}\},
$
and so on.
  \item[] $\gamma$ --- the Euler-Mascheroni constant:
\[
  \gamma 
  \defeq \lim_{n \to \infty} 
     \br{{\textstyle 1 + \frac{1}{2} + \ldots + \frac{1}{n}} 
        - \log n}
  = 0.57721\ldots.
\]
 \item[] $\phi(q)$ --- Euler's totient function.
 \item[] $\chi$, $\chi_0$ --- respectively a Dirichlet character to 
         the modulus $q$ and the principal Dirichlet character to 
         the modulus $q$.
 \item[] $\bar{\chi}$ --- the complex conjugate of $\chi$.
 \item[] $\zeta(s)$, $L(s,\chi)$ --- respectively the Riemann 
         zeta-function and the Dirichlet $L$-function corresponding 
         to $\chi$.
 \item[] $\log \zeta(s)$, $L(s,\chi)^{1/\phi(q)}$, etc. --- 
         single-valued branches of the corresponding functions that 
         are positive for $s = \sigma > 1$.
 \item[] $\pi(t)$, $\pi(t;q,a)$ --- the prime counting functions
\[
 \pi(t) \defeq \#\{p : p \le t\}, \quad
 \pi(t;q,a) \defeq \#\{p \le t : p \equiv a \bmod q\},
\]
$\#S$ denoting the cardinality of a set $S$.
 \item[] $\li{t}$ --- the logarithmic integral of $t$: 
\[
 \li{t} \defeq \int_2^t \frac{\dd{u}}{\log u}.
\]
 \item[] $E(t)$, $E(t;q,a)$ --- the error terms in the prime number 
         theorems:
\[
  E(t) \defeq \pi(t) - \li{t}, \quad 
  E(t;q,a) \defeq \pi(t;q,a) - \phi(q)^{-1}\li{t}.
\]
 \item[] $c$ --- an absolute positive constant, not necessarily the 
         same constant in each occurrence.
 \item[] $X = \Oh[Y]$, $X \ll Y$, $Y \gg X$ --- all mean that 
         $|X| \le C|Y|$ for some constant $C > 0$, which is 
         absolute unless indicated otherwise with subscripts, as in 
         $X \ll_{\epsilon, A} Y$, which means that $C$ depends on 
         $\epsilon$ and $A$.
 \item[] $X \asymp Y$ --- means that $X \ll Y \ll X$.
 \item[] $\oh[1][s \to w]$ --- a quantity, depending on $s$, that 
         tends to $0$ in absolute value as $s$ tends to $w$.
\end{itemize}

\section{The main result and its corollaries}
 \label{sec:3}

Throughout, let $q$ and $a$ be integers such that 
$1 \le a \le q$ and $(q,a) = 1$.
Let $E(t;q,a) \defeq \pi(t;q,a) - \phi(q)^{-1}\li{t}$ 
denote the error term in the prime number theorem for arithmetic 
progressions. 
Let $\mathcal{L}(q,a)$ be the real, positive number satisfying 
\[
 \textstyle
  \mathcal{L}(q,a)^{\phi(q)} 
 \defeq
   \br{\frac{\phi(q)}{q}}
    \prod_{\chi \ne \chi_0} L(1,\chi)^{\bar{\chi}(a)},
\] 
and let 
\begin{align}
 \label{eq:3.1}
 g(q,a) 
 \defeq
  \phi(q)^{-1}\gamma 
   + \log \mathcal{L}(q,a) 
    - \underset{p^{\nu} \equiv a \bmod q}
               {\sum_{p}\sum_{\nu \ge 2}}
                 \frac{1}{\nu p^{\nu}}.
\end{align}

\begin{proposition}
 \label{prp:3.1}
Let $x \ge 2$ and let $q$ and $a$ be positive, coprime integers.  
We have
\begin{align}
 \label{eq:3.2}
  \int_2^{\infty} t^{-2} E(t;q,a) \dd{t}
 = 
    g(q,a)
   + \phi(q)^{-1}\log\log 2,
\end{align}
and
\begin{align}
 \label{eq:3.3}
  \begin{split}
  \sums[p \le x][p \equiv a \bmod q] p^{-1}
 & = \phi(q)^{-1}\log\log x + g(q,a) \\
 & \hspace{30pt}
   + x^{-1}  E(x;q,a) - \int_{x}^{\infty} t^{-2}E(t;q,a) \dd{t},
 \end{split}
\end{align}
with $g(q,a)$ as in \eqref{eq:3.1}.
\end{proposition}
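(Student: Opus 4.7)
The plan is to prove $(3.2)$ first and deduce $(3.3)$ from it via partial summation. Applying Abel's formula to $\sum_{p \le x,\, p \equiv a \bmod q} p^{-1}$ and substituting $\pi(t;q,a) = \phi(q)^{-1}\li{t} + E(t;q,a)$, I would integrate the $\li$-contribution by parts and use $\li{2} = 0$ so that the main term telescopes to $\phi(q)^{-1}(\log\log x - \log\log 2)$. Splitting $\int_2^x E(t;q,a)\, t^{-2}\dd{t}$ as $\int_2^{\infty} - \int_x^{\infty}$ and inserting $(3.2)$ then absorbs the $-\phi(q)^{-1}\log\log 2$ term into $g(q,a) - \int_x^{\infty} E(t;q,a)\,t^{-2}\dd{t}$, yielding $(3.3)$.

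To establish $(3.2)$, following Hardy, I would evaluate the Dirichlet series $F(s;q,a) \defeq \sum_{p \equiv a \bmod q} p^{-s}$ (absolutely convergent for $\sigma > 1$) in two distinct ways as $s \to 1^+$ and compare. First, character orthogonality together with the Euler-product expansion of $\log L(s,\chi)$ yields
\[
F(s;q,a) = \phi(q)^{-1}\sum_{\chi \bmod q}\bar\chi(a)\log L(s,\chi) - \sum_{\substack{p,\,\nu \ge 2 \\ p^{\nu} \equiv a \bmod q}}\frac{1}{\nu p^{\nu s}}.
\]
Isolating $\chi_0$ through $L(s,\chi_0) = \zeta(s)\prod_{p \mid q}(1-p^{-s})$, using $(s-1)\zeta(s) = 1 + \gamma(s-1) + O((s-1)^2)$, and using the non-vanishing of $L(1,\chi)$ for $\chi \ne \chi_0$, I would collect terms to match the definition of $g(q,a)$ in \eqref{eq:3.1}, obtaining
\[
F(s;q,a) = -\phi(q)^{-1}\log(s-1) + g(q,a) - \phi(q)^{-1}\gamma + \oh[1][s \to 1^+].
\]

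For the second evaluation, Abel summation with $\pi(t;q,a) = \phi(q)^{-1}\li{t} + E(t;q,a)$ gives, for $\sigma > 1$,
\[
F(s;q,a) = s\phi(q)^{-1}\int_2^{\infty} t^{-s-1}\li{t}\,\dd{t} + s\int_2^{\infty} t^{-s-1}E(t;q,a)\,\dd{t}.
\]
Integration by parts and the change of variables $u = (s-1)\log t$ identify the first integral with the exponential integral $E_1((s-1)\log 2)$, whose expansion near $s = 1$ reads $-\gamma - \log(s-1) - \log\log 2 + O(s-1)$. Equating the two evaluations, the $\log(s-1)$ and $\gamma$ singularities cancel exactly, leaving
\[
\lim_{s \to 1^+} s\int_2^{\infty} t^{-s-1}E(t;q,a)\,\dd{t} = g(q,a) + \phi(q)^{-1}\log\log 2.
\]

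The main obstacle is the final Tauberian step of identifying this Abel-type limit with the ordinary improper integral $\int_2^{\infty} t^{-2} E(t;q,a)\,\dd{t}$ and establishing its convergence. Here I would invoke the Siegel--Walfisz theorem, which gives $E(t;q,a) \ll_q t\exp(-c\sqrt{\log t})$; this renders $t^{-2}|E(t;q,a)|$ absolutely integrable on $[2,\infty)$ and provides a dominating function valid for all $s \in (1,2]$. Dominated convergence then justifies interchanging limit and integral, completing the proof of $(3.2)$.
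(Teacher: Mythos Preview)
Your argument is correct and tracks the paper's proof almost line for line up to the final Tauberian step, where you diverge in a meaningful way. The paper deliberately avoids any form of the prime number theorem here: it invokes Landau's Tauberian theorem (Lemma~4.2), which needs only the Chebyshev bound $\pi(t;q,a) \ll t(\log t)^{-1}$ to guarantee $t^{-2}E(t;q,a) \ll (t\log t)^{-1}$ and hence that the Abel limit coincides with the improper integral. You instead appeal to the prime number theorem for arithmetic progressions (in its Siegel--Walfisz form) to force absolute integrability and apply dominated convergence. This works, but it inverts the paper's logical architecture: Proposition~3.1 is meant to be a structural identity proved with minimal input, into which one later \emph{plugs} bounds on $E(t;q,a)$ (Theorems~3.4--3.6). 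By using Siegel--Walfisz inside the proof you consume one of those inputs prematurely and lose the clean separation; you also make the proposition depend on a theorem far deeper than anything else in the argument. The trade-off is that you sidestep the need for Landau's Tauberian lemma, which is the one non-obvious ingredient in Hardy's proof.
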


We have tacitly used the fact that $L(1,\chi) \ne 0$ for 
$\chi \ne \chi_0$ in stating that $\mathcal{L}(q,a)$ is real and 
positive. 
This is tantamount to showing that there are infinitely many primes 
congruent to $a \bmod q$.
The only other result from prime number theory that we will use in 
the proof of Proposition \ref{prp:3.1} is the Chebyshev bound 
$
 \pi(t) \ll t(\log t)^{-1}.
$
Using this the reader may deduce the next proposition from the one
above.
(The reader may also verify Proposition \ref{prp:3.2} directly, in 
a manner similar to that of the proof of Proposition 
\ref{prp:3.1}.)

Let
\begin{align}
 \label{eq:3.4}
 \begin{split}
 G(q,a)
  & \defeq 
     \textstyle
      \exp
       \Br{%
      - \phi(q)^{-1}\gamma
       + g(q,a)
       +  \sum_{p \equiv a \bmod q}\sum_{\nu \ge 2}
           \frac{1}{\nu p^{\nu}} 
          } \\
  & \phantom{:}= 
  \mathcal{L}(q,a)\cdot
   \textstyle 
    \exp
     \Br{%
        \sum_{p \equiv a \bmod q}\sum_{\nu \ge 2}
         \frac{1}{\nu p^{\nu}}
        -
        \underset{p^{\nu} \equiv a \bmod q}
               {\sum_{p}\sum_{\nu \ge 2}}
                 \frac{1}{\nu p^{\nu}}
        }.
 \end{split}
\end{align}

\begin{proposition}
 \label{prp:3.2}
Let $x \ge 2$ and let $q$ and $a$ be positive, coprime integers.
We have
\begin{align}
 \label{eq:3.5}
 \begin{split}
 & \sums[p \le x][p \equiv a \bmod q] 
    \log\br{1 - \frac{1}{p}}^{-1} \\ 
 & \hspace{60pt} =
    \phi(q)^{-1}(\gamma + \log\log x)  
   + \log G(q,a) \\
 & \hspace{90pt}
  + x^{-1}E(x;q,a) 
   - \int_{x}^{\infty} t^{-2}E(t;q,a) \dd{t}
    + \Oh[(x\log x)^{-1}],
 \end{split}
\end{align}
with $G(q,a)$ as in \eqref{eq:3.4}.
The implicit constant is absolute.
\end{proposition}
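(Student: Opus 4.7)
The plan is to expand the logarithm as a power series and reduce everything to Proposition \ref{prp:3.1}, using the definition of $G(q,a)$ to repackage the resulting constants. For each prime $p \le x$ we write
\[
 \log\br{1 - \tfrac{1}{p}}^{-1}
  = \frac{1}{p} + \sum_{\nu \ge 2} \frac{1}{\nu p^{\nu}},
\]
so the sum on the left of \eqref{eq:3.5} splits as $\sum_{p \le x,\, p \equiv a} p^{-1}$ plus a tail-truncated double sum. Proposition \ref{prp:3.1} handles the first summand directly.

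For the double sum, I would complete it to a sum over all primes $p \equiv a \bmod q$:
\[
 \sums[p \le x][p \equiv a \bmod q] \sum_{\nu \ge 2} \frac{1}{\nu p^{\nu}}
  = \sum_{p \equiv a \bmod q} \sum_{\nu \ge 2} \frac{1}{\nu p^{\nu}}
   - \sums[p > x][p \equiv a \bmod q] \sum_{\nu \ge 2} \frac{1}{\nu p^{\nu}}.
\]
The tail is dominated by $\sum_{p > x} p^{-2}(1 - p^{-1})^{-1} \ll \sum_{p > x} p^{-2}$, which by partial summation against Chebyshev's bound $\pi(t) \ll t/\log t$ is $\Oh[(x \log x)^{-1}]$. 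This is the only place where a genuine estimate (as opposed to bookkeeping) enters, and it is routine.

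Adding the two pieces and invoking the definition \eqref{eq:3.4} of $G(q,a)$, which can be rewritten as
\[
 g(q,a) + \sum_{p \equiv a \bmod q} \sum_{\nu \ge 2} \frac{1}{\nu p^{\nu}}
  = \phi(q)^{-1}\gamma + \log G(q,a),
\]
collapses the constants into the form appearing on the right-hand side of \eqref{eq:3.5}. No step presents a real obstacle; the point is simply that Proposition \ref{prp:3.1} plus a Chebyshev tail bound plus the identity defining $G(q,a)$ together assemble into \eqref{eq:3.5}, with the implicit constant absolute because the tail estimate is.
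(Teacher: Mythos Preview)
Your proposal is correct and follows exactly the route the paper indicates: deduce Proposition~\ref{prp:3.2} from Proposition~\ref{prp:3.1} by expanding the logarithm, bounding the tail $\sum_{p > x}\sum_{\nu \ge 2}(\nu p^{\nu})^{-1}$ via the Chebyshev estimate $\pi(t) \ll t(\log t)^{-1}$, and then absorbing the constants using the definition \eqref{eq:3.4} of $G(q,a)$. The paper does not write this out but explicitly says the reader may carry it through in just this way.
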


We have the following immediate corollary to the above 
propositions. 

\begin{theorem}
 \label{thm:3.3}
Let $q$ and $a$ be positive, coprime integers.
We have
\begin{align}
 \label{eq:3.6}
 \sums[p \le x][p \equiv a \bmod q] p^{-1}
  =
  \phi(q)^{-1}\log\log x + g(q,a) + \oh[1][x \to \infty],
\end{align}
and
\begin{align}
 \label{eq:3.7}
 \prods[p \le x][p \equiv a \bmod q]\br{1 - \frac{1}{p}}^{-1}
  =
   G(q,a)\cdot 
    \br{\e^{\gamma} \log x}^{1/\phi(q)}
     \cdot\Br{1 + \oh[1][x \to \infty]}.
\end{align}
Here, $g(q,a)$ and $G(q,a)$ are as in \eqref{eq:3.1} and 
\eqref{eq:3.4} respectively.
\end{theorem}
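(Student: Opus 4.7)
The plan is to treat Theorem \ref{thm:3.3} as the straightforward asymptotic consequence that it is advertised to be: namely, the explicit identities in Propositions \ref{prp:3.1} and \ref{prp:3.2} already yield the main terms of \eqref{eq:3.6} and \eqref{eq:3.7}, so the only task is to verify that the remaining terms $x^{-1}E(x;q,a)$ and $\int_{x}^{\infty} t^{-2}E(t;q,a)\dd{t}$ contribute only $\oh[1][x\to\infty]$.

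For the boundary term, I would invoke Chebyshev's bound $\pi(t)\ll t/\log t$, which, combined with $\li{t}\ll t/\log t$ and the trivial domination $\pi(t;q,a)\le \pi(t)$, gives $E(x;q,a)\ll x/\log x$. Therefore $x^{-1}E(x;q,a)\ll 1/\log x = \oh[1][x\to\infty]$. For the tail integral, I would not estimate it pointwise (in fact $t^{-1}(\log t)^{-1}$ is not integrable at infinity); instead, I would simply observe that \eqref{eq:3.2} of Proposition \ref{prp:3.1} asserts that $\int_{2}^{\infty} t^{-2}E(t;q,a)\dd{t}$ converges, and the tail of any convergent improper integral tends to zero. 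Plugging both estimates into \eqref{eq:3.3} produces \eqref{eq:3.6}.

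For \eqref{eq:3.7} I would start from \eqref{eq:3.5}, apply the same two observations to dispose of $x^{-1}E(x;q,a)$, $\int_{x}^{\infty} t^{-2}E(t;q,a)\dd{t}$, and the $\Oh[(x\log x)^{-1}]$ term, and then exponentiate. Writing the resulting identity as
\[
 \sums[p \le x][p \equiv a \bmod q] \log\br{1 - \tfrac{1}{p}}^{-1}
 = \phi(q)^{-1}\br{\gamma + \log\log x} + \log G(q,a) + \oh[1][x\to\infty],
\]
exponentiation gives the product $(\e^{\gamma}\log x)^{1/\phi(q)} G(q,a)(1+\oh[1][x\to\infty])$, using $\e^{\oh[1]} = 1 + \oh[1]$ to convert the additive error into a multiplicative one.

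I do not foresee a genuine obstacle: once Propositions \ref{prp:3.1} and \ref{prp:3.2} are in hand, the theorem is purely bookkeeping, and the only substantive input from prime number theory is Chebyshev's bound, needed only to handle the point evaluation $x^{-1}E(x;q,a)$. In particular no uniformity in $q$ is claimed here, so we may treat $q$ and $a$ as fixed throughout and let the $\oh[1][x\to\infty]$ notation absorb any $q$-dependence in the implied bounds.
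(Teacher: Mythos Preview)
Your proposal is correct and matches the paper's treatment: the paper presents Theorem \ref{thm:3.3} as an ``immediate corollary'' of Propositions \ref{prp:3.1} and \ref{prp:3.2} without further argument, and your verification that $x^{-1}E(x;q,a)$ and the tail integral are $\oh[1][x\to\infty]$ (via Chebyshev and convergence of \eqref{eq:3.2}, respectively) is exactly the bookkeeping implicit in that claim.
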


For fixed $q$, the prime number theorem for arithmetic progressions 
states that $E(t;q,a) \ll t\e^{-c\sqrt{\log t}}$. 
(Cf.~\cite[Th\'eor\`eme 8.5]{MR0417077}.)
Thus, for fixed $q$, equality holds in \eqref{eq:3.6} with an 
error of $\Oh[\e^{-c\sqrt{\log x}}]$.
Similarly for \eqref{eq:3.7}.
We get an error of  
$
\Oh[\e^{-c(\log x)^{3/5}(\log\log x)^{-1/5}}] 
$
if we use the Korobov-Vinogradov bound for $E(t;q,a)$.
In fact, by the prime number theorem for arithmetic progressions, 
these estimates hold uniformly for all non-exceptional $q$.
(The product $\prod_{\chi} L(s,\chi)$ has at most one zero in the 
region $\sigma \ge 1 - \frac{1}{100\log(q(\ab{\tau}+3))}$.
We say $q$ is {\em exceptional} if the product does have a zero in 
this region.)

Conjecturally there are no exceptional moduli, but in case there 
are we have to use Siegel's theorem to obtain uniform results with 
an error term as good as in the prime number theorem for arithmetic 
progressions. 
The Siegel-Walfisz theorem states that for any given number 
$A > 0$, we have $E(t;q,a) \ll t\e^{-c(A)\sqrt{\log t}}$, uniformly 
for $q \le (\log t)^A$, where $c(A)$ is a positive constant 
depending on $A$.
We also have $E(t;q,a) \ll_A t(\log t)^{-A}$, uniformly for all 
$q$. 
(Cf.~\cite[Th\'eor\`eme 8.5]{MR0417077}).
Combining with Propositions \ref{prp:3.1} and \ref{prp:3.2} gives 
the following result.

\begin{theorem}
 \label{thm:3.4}
Fix a number $A > 0$.
(a) We have
\begin{align}
 \label{eq:3.8}
  \sums[p \le x][p \equiv a \bmod q] p^{-1}
 =
  \phi(q)^{-1}\log\log x 
 + g(q,a)
  + \Oh[\e^{-c(A)\sqrt{\log x}}],
\end{align}
and
\begin{align}
 \label{eq:3.9}
 \prods[p \le x][p \equiv a \bmod q] \br{1 - \frac{1}{p}}^{-1}
 =
 G(q,a)\cdot (\e^{\gamma}\log x)^{1/\phi(q)}\cdot
  \Br{1 + \Oh[\e^{-c(A)\sqrt{\log x}}]},
\end{align}
uniformly for $x \ge 2$, $1 \le q \le (\log x)^A$, and $a \ge 1$ 
with $(q,a) = 1$, where $c(A)$ is a positive constant that 
depends only on $A$.
(The implicit constants are absolute.)
Here, $g(q,a)$ and $G(q,a)$ are as in \eqref{eq:3.1} and 
\eqref{eq:3.4} respectively.
(b) We have
\begin{align}
 \label{eq:3.10}
  \sums[p \le x][p \equiv a \bmod q] p^{-1}
 =
  \phi(q)^{-1}\log\log x 
 + g(q,a)
  + \Oh[(\log x)^{-A}][A],
\end{align}
and
\begin{align}
 \label{eq:3.11}
 \prods[p \le x][p \equiv a \bmod q] \br{1 - \frac{1}{p}}^{-1}
 =
 G(q,a)\cdot (\e^{\gamma}\log x)^{1/\phi(q)}\cdot
  \Br{1 + \Oh[(\log x)^{-A}][A]},
\end{align}
uniformly for $x \ge 2$, $q \ge 1$, and $a \ge 1$ with 
$(q,a) = 1$.
The implicit constants depend only on $A$.
Here, $g(q,a)$ and $G(q,a)$ are as in \eqref{eq:3.1} and 
\eqref{eq:3.4} respectively.
\end{theorem}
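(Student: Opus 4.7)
The strategy is to combine Propositions~\ref{prp:3.1} and~\ref{prp:3.2} with the Siegel--Walfisz bounds cited in the paragraph preceding the theorem. Both propositions express the quantity of interest as a main term $\phi(q)^{-1}\log\log x + g(q,a)$ (or its product analogue involving $G(q,a)$) plus two error contributions: a boundary term $x^{-1}E(x;q,a)$ and a tail integral $\int_{x}^{\infty} t^{-2}E(t;q,a) \dd{t}$ (with an additional, negligible $\Oh[(x\log x)^{-1}]$ in the product case). The theorem therefore reduces to estimating these two quantities in each of the two regimes.

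\textbf{Part (a).} For $1 \le q \le (\log x)^A$, I would invoke $E(t;q,a) \ll t\e^{-c(A)\sqrt{\log t}}$, valid whenever $q \le (\log t)^A$; since $t \ge x$ throughout the integration range, the hypothesis $q \le (\log x)^A \le (\log t)^A$ is automatic. The boundary contribution is then $\ll \e^{-c(A)\sqrt{\log x}}$. For the tail, the substitutions $u = \log t$ and then $v = \sqrt{u}$ reduce the integral to $\int_{\sqrt{\log x}}^{\infty} 2v\e^{-c(A)v}\dd{v}$, which by integration by parts is $\ll \sqrt{\log x}\,\e^{-c(A)\sqrt{\log x}}$. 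The polynomial factor is absorbed at the cost of replacing $c(A)$ by a slightly smaller constant (still depending only on $A$), yielding~\eqref{eq:3.8}. For~\eqref{eq:3.9} I apply Proposition~\ref{prp:3.2}, observe that the extra $\Oh[(x\log x)^{-1}]$ is dominated by the new error, and exponentiate using $\exp(\delta) = 1 + \Oh[\delta]$ whenever $\delta \to 0$.

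\textbf{Part (b).} For arbitrary $q$, I would apply the unconditional bound $E(t;q,a) \ll_A t(\log t)^{-A}$. The boundary term is then $\ll_A (\log x)^{-A}$ directly. The tail gives $\int_x^\infty t^{-1}(\log t)^{-A}\dd{t} \ll_A (\log x)^{1-A}$ (for $A > 1$), which is a logarithmic factor weaker than what is claimed. The standard workaround is to invoke the hypothesis with $A+1$ in place of $A$: since $A$ is arbitrary but fixed, the resulting implicit constant still depends only on $A$, and we recover~\eqref{eq:3.10}. Equation~\eqref{eq:3.11} then follows by exponentiation as in Part~(a).

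\textbf{Expected obstacle.} The only substantive care lies in the tail integral: in Part~(a) one must absorb the $\sqrt{\log x}$ prefactor from integration by parts into the exponential by shrinking the constant, and in Part~(b) one must anticipate the loss of a logarithmic factor by invoking Siegel--Walfisz at exponent $A+1$. Neither manipulation is deep, but both require bookkeeping. Everything else is routine substitution into the identities of Propositions~\ref{prp:3.1} and~\ref{prp:3.2}.
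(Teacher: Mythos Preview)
Your proposal is correct and follows exactly the approach indicated by the paper, which simply states that Theorem~\ref{thm:3.4} follows by ``combining [Propositions~\ref{prp:3.1} and~\ref{prp:3.2}] with'' the quoted Siegel--Walfisz bounds. You have correctly filled in the bookkeeping the paper leaves implicit: absorbing the $\sqrt{\log x}$ prefactor into the exponential in part~(a), and invoking the unconditional bound at exponent $A+1$ to recover the claimed $(\log x)^{-A}$ in part~(b).
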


We can replace the $\Oh$-terms in \eqref{eq:3.8} and 
\eqref{eq:3.9} by 
$
 \Oh[\e^{-c(A)(\log x)^{3/5}(\log\log x)^{-1/5}}], 
$
by using the Korobov-Vinogradov bound for $E(t;q,a)$.

If the generalized Riemann hypothesis for Dirichlet 
$L$-functions (GRH) holds then $E(t;q,a) \ll t^{1/2}\log t$, 
uniformly for all $q$.
Beyond even GRH, H.~Montgomery has conjectured that for any 
$\epsilon > 0$, $E(t;q,a) \ll_{\epsilon} q^{-1/2}t^{1/2+\epsilon}$, 
for any $q$.
Hence the following conditional results. 

\begin{theorem}
 \label{thm:3.5}
On GRH we have
\begin{align}
 \label{eq:3.12}
  \sums[p \le x][p \equiv a \bmod q] p^{-1}
 =
  \phi(q)^{-1}\log\log x 
 + g(q,a)
  + \Oh[x^{-1/2}\log x],
\end{align}
and
\begin{align}
 \label{eq:3.13}
  \begin{split}
 \prods[p \le x][p \equiv a \bmod q] \br{1 - \frac{1}{p}}^{-1}
 & =
 G(q,a)\cdot (\e^{\gamma}\log x)^{1/\phi(q)}\cdot
  \Br{1 + \Oh[x^{-1/2}\log x]},
  \end{split}
\end{align}
uniformly for $x \ge 2$, $q \ge 1$, and $a \ge 1$ with $(q,a) = 1$.
The implicit constants are absolute.
Here, $g(q,a)$ and $G(q,a)$ are as in \eqref{eq:3.1} and 
\eqref{eq:3.4} respectively.
\end{theorem}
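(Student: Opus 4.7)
The plan is to substitute the GRH bound $E(t;q,a) \ll t^{1/2}\log t$ (uniform in $q$) into the exact expressions of Propositions \ref{prp:3.1} and \ref{prp:3.2}; everything then reduces to estimating two elementary quantities, the boundary term $x^{-1}E(x;q,a)$ and the tail integral $\int_x^\infty t^{-2} E(t;q,a)\dd{t}$. Since the hypothesis GRH delivers the $L$-function input and Propositions \ref{prp:3.1}--\ref{prp:3.2} do all the analytic heavy lifting, the remaining work is essentially a calculus exercise.

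For \eqref{eq:3.12}, I would start from \eqref{eq:3.3} and bound the last two terms under GRH. The boundary term is immediate: $x^{-1}E(x;q,a) \ll x^{-1/2}\log x$. For the tail, integration by parts (or direct antidifferentiation of $t^{-3/2}\log t$) gives
\begin{equation*}
  \int_x^{\infty} t^{-2}E(t;q,a)\dd{t}
  \ll \int_x^{\infty} t^{-3/2}\log t \dd{t}
  = 2x^{-1/2}\log x + 4x^{-1/2}
  \ll x^{-1/2}\log x,
\end{equation*}
since $t^{-3/2}\log t$ is integrable near infinity. Combining these two estimates in \eqref{eq:3.3} yields \eqref{eq:3.12}, with an absolute implicit constant, uniformly in $q$ and $a$.

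For \eqref{eq:3.13}, the same two estimates applied to \eqref{eq:3.5} give
\begin{equation*}
  \sums[p \le x][p \equiv a \bmod q]\log\br{1 - \tfrac{1}{p}}^{-1}
  = \phi(q)^{-1}(\gamma + \log\log x) + \log G(q,a) + \Oh[x^{-1/2}\log x],
\end{equation*}
absorbing the $\Oh[(x\log x)^{-1}]$ from \eqref{eq:3.5} into the larger error. Exponentiating both sides and using $\e^{\Oh[x^{-1/2}\log x]} = 1 + \Oh[x^{-1/2}\log x]$ (valid since the argument tends to $0$ as $x \to \infty$, uniformly in $q$) produces \eqref{eq:3.13}.

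No step here is delicate; the only thing to watch is that the GRH bound on $E(t;q,a)$ is genuinely \emph{uniform} in $q$, which is what makes the error term in \eqref{eq:3.12} and \eqref{eq:3.13} independent of $q$ and $a$. All constants are absolute because the GRH bound, the Chebyshev bound implicitly used in Proposition \ref{prp:3.2}, and the elementary integral estimates above carry absolute constants.
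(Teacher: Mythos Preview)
Your proposal is correct and is exactly the approach the paper takes: it states the GRH bound $E(t;q,a) \ll t^{1/2}\log t$ (uniform in $q$) and then simply says ``Hence the following conditional results,'' leaving the substitution into Propositions \ref{prp:3.1} and \ref{prp:3.2} to the reader. Your write-up fills in precisely those routine details.
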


\begin{theorem}
 \label{thm:3.6}
Fix any number $\epsilon > 0$.
Let $x \ge 2$ be a number and let $q$ and $a$ be positive, coprime 
integers.
If Montgomery's conjecture holds, then
\begin{align}
 \label{eq:3.14}
  \sums[p \le x][p \equiv a \bmod q] p^{-1}
 =
  \phi(q)^{-1}\log\log x 
 + g(q,a)
  + \Oh[q^{-1/2}x^{-1/2 + \epsilon}][\epsilon],
\end{align}
and
\begin{align}
 \label{eq:3.15}
  \begin{split}
 \prods[p \le x][p \equiv a \bmod q] \br{1 - \frac{1}{p}}^{-1}
 & =
 G(q,a)\cdot (\e^{\gamma}\log x)^{1/\phi(q)}\cdot 
  \Br{1 + \Oh[q^{-1/2}x^{-1/2 + \epsilon}][\epsilon]}.
  \end{split}
\end{align}
The implicit constants depend only on $\epsilon$.
Here, $g(q,a)$ and $G(q,a)$ are as in \eqref{eq:3.1} and 
\eqref{eq:3.4} respectively.
\end{theorem}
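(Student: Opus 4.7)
The plan is to treat Theorem~\ref{thm:3.6} as a direct application of the exact identities furnished by Propositions~\ref{prp:3.1} and~\ref{prp:3.2}. Those propositions isolate the full contribution of the main terms and leave a remainder built entirely out of the error term $E(t;q,a)$ in the prime number theorem for arithmetic progressions. Montgomery's conjecture, $E(t;q,a) \ll_{\epsilon} q^{-1/2}\, t^{1/2+\epsilon}$, is tailor-made to be plugged into that remainder.

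For~\eqref{eq:3.14} I start from~\eqref{eq:3.3}. Two quantities must be controlled: the boundary term $x^{-1} E(x;q,a)$ and the tail integral $\int_x^\infty t^{-2} E(t;q,a)\,dt$. Montgomery's bound applied at $t=x$ gives $x^{-1} E(x;q,a) \ll_{\epsilon} q^{-1/2}\, x^{-1/2+\epsilon}$. For the tail, I pull $q^{-1/2}$ outside and estimate
\[
  \int_x^\infty t^{-2}\,\lvert E(t;q,a)\rvert\,dt \ll_{\epsilon} q^{-1/2} \int_x^\infty t^{-3/2+\epsilon}\,dt = \frac{q^{-1/2}\, x^{-1/2+\epsilon}}{1/2-\epsilon},
\]
which converges for $\epsilon < 1/2$ and yields the same bound $O_{\epsilon}(q^{-1/2}\, x^{-1/2+\epsilon})$. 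Since the given $\epsilon$ is arbitrarily small, I may assume $\epsilon < 1/2$ without loss of generality (or apply Montgomery's conjecture with $\epsilon/2$ in place of $\epsilon$, so that the factor $1/2 - \epsilon/2$ is harmlessly absorbed into the implicit constant). Substituting back into~\eqref{eq:3.3} yields~\eqref{eq:3.14}.

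The product version~\eqref{eq:3.15} proceeds identically from~\eqref{eq:3.5}. The two error contributions are handled exactly as above, and the additional $O((x\log x)^{-1})$ term in~\eqref{eq:3.5} is trivially majorized by $q^{-1/2}\, x^{-1/2+\epsilon}$ for $q \ge 1$ and $x \ge 2$, so it is absorbed into the error. Exponentiating and using $e^{O(y)} = 1 + O(y)$ for bounded $y$ converts the additive statement into the multiplicative form displayed in~\eqref{eq:3.15}.

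There is no genuine obstacle, and that is precisely the advertised virtue of Hardy's method: the hard analytic work has already been carried out in Propositions~\ref{prp:3.1} and~\ref{prp:3.2}, which reduce Mertens-type asymptotics to an integral test against the prime-counting error term. The only minor points to watch are the convergence of the tail integral (needing $\epsilon < 1/2$) and the tracking of the $\epsilon$-dependence of the implicit constants, both of which are routine. An improved bound on $E(t;q,a)$ translates mechanically into an improved Mertens's theorem.
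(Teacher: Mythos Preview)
Your proposal is correct and follows exactly the paper's approach: the paper presents Theorem~\ref{thm:3.6} as an immediate consequence of plugging Montgomery's conjectured bound $E(t;q,a)\ll_\epsilon q^{-1/2}t^{1/2+\epsilon}$ into the exact identities of Propositions~\ref{prp:3.1} and~\ref{prp:3.2}, and you carry this out explicitly. One small caveat: your claim that $(x\log x)^{-1}$ is ``trivially majorized'' by $q^{-1/2}x^{-1/2+\epsilon}$ for all $q\ge 1$ fails when $q$ is much larger than $x$; however, the actual tail $\sum_{p>x,\,p\equiv a\bmod q}\sum_{\nu\ge 2}(\nu p^{\nu})^{-1}$ underlying the $O((x\log x)^{-1})$ in \eqref{eq:3.5} is then $\ll q^{-1}\le q^{-1/2}x^{-1/2}$ for $q>x$, so the gap is easily closed.
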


\section{Proof of the main result}
 \label{sec:4}

In addition to the bound $\pi(t) \ll t(\log t)^{-1}$, we will use 
the following results in the proof of Proposition \ref{prp:3.1}. 
The first lemma is a standard result in the theory of the Gamma 
function. 
The second lemma is a Tauberian theorem due to Landau 
\cite{44.0282.02}.

\begin{lemma}
 \label{lem:4.1}
Let $\eta > 0$ and $\delta > 0$ be given. 
We have
\[
 \int_{\eta}^{\infty} u^{-1}\e^{-\delta u} \dd{u} 
 = 
  \textstyle 
   \log \frac{1}{\delta} 
  - \log \eta 
   - \gamma 
    + \oh[1][\delta \to 0].
\]
\end{lemma}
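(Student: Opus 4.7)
The plan is to reduce the integral to a standard exponential integral and then identify the limiting constant. First, I would substitute $v = \delta u$ to obtain
\[
 \int_{\eta}^{\infty} u^{-1}\e^{-\delta u}\dd{u}
  = \int_{\delta\eta}^{\infty} v^{-1}\e^{-v}\dd{v},
\]
so that the right-hand side depends only on $z \defeq \delta\eta$, which tends to $0^+$ as $\delta \to 0$ (with $\eta$ fixed and positive). Since $-\log(\delta\eta) = \log(1/\delta) - \log \eta$, the lemma is equivalent to the asymptotic
\[
 \int_{z}^{\infty} v^{-1}\e^{-v}\dd{v} = -\log z - \gamma + \oh[1][z \to 0^+].
\]

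To establish this, I would integrate $\int_z^{\infty} \e^{-u}\log u \dd{u}$ by parts, taking $f = \log u$ and $g' = \e^{-u}$. Since $\e^{-u}\log u \to 0$ as $u \to \infty$, this yields the identity
\[
 \int_{z}^{\infty} \e^{-u}\log u \dd{u}
  = \e^{-z}\log z + \int_{z}^{\infty} u^{-1}\e^{-u}\dd{u}
\]
for every $z > 0$. Letting $z \to 0^+$, the left-hand side converges to $\int_{0}^{\infty} \e^{-u}\log u\dd{u}$, which by a classical identity equals $-\gamma$. On the right, $\e^{-z}\log z = \log z + \Oh[z\ab{\log z}] = \log z + \oh[1][z \to 0^+]$, and rearranging yields the desired expansion.

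The main (small) obstacle is invoking the identity $\int_0^{\infty} \e^{-u}\log u\dd{u} = -\gamma$, which is what pins down the integration constant as the Euler--Mascheroni constant rather than some other numerical value. This identity is classical — it follows from differentiating $\Gamma(s) = \int_0^{\infty} u^{s-1}\e^{-u}\dd{u}$ at $s = 1$ — and may simply be quoted from any standard reference on the Gamma function, which is all the authors need, since they present the lemma as a standard result. The remaining steps (the substitution, the integration by parts, and the elementary limits) are all routine.
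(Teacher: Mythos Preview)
Your proof is correct. The paper does not actually prove Lemma~\ref{lem:4.1}; it simply states it as ``a standard result in the theory of the Gamma function'' and moves on. Your argument --- the substitution $v=\delta u$, the integration by parts linking $\int_z^{\infty}u^{-1}\e^{-u}\dd{u}$ to $\int_z^{\infty}\e^{-u}\log u\dd{u}$, and the identification $\int_0^{\infty}\e^{-u}\log u\dd{u}=\Gamma'(1)=-\gamma$ --- is exactly the kind of Gamma-function computation the paper alludes to, so there is nothing to compare against and nothing to fix.
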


\begin{lemma}
 \label{lem:4.2}
Let $\eta > 0$ and $\delta > 0$ be given, and let 
$f$ be a real-valued integrable function on $[\eta,\infty)$. 
Suppose that the integral 
\[
 J(\delta) \defeq \int_{\eta}^{\infty} f(u)u^{-\delta} \dd{u} 
\]
converges and tends to a limit $L$ as $\delta \to 0$, and 
suppose that $f(u) \ll (u\log u)^{-1}$ for $u \ge \eta$. 
Then $J(0)$ converges and is equal to $L$. 
\end{lemma}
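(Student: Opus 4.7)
The plan is to show that $F(X) \defeq \int_\eta^X f(u)\,\dd{u}$ converges to $L$ as $X \to \infty$. The two hypotheses---Abel-type convergence $J(\delta) \to L$ and the Tauberian regularity $f(u) \ll (u\log u)^{-1}$---must be combined in a coupled way. On its own, the bound on $f$ forces $F$ to be slowly oscillating, in the quantitative sense that $|F(X') - F(X)| \le C(\log\log X' - \log\log X)$ for $X' > X$, but does not give convergence; the hypothesis on $J$ provides convergence of a smoothly weighted average of $f$.

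The natural starting point is the identity
\[
 J(\delta) - F(X)
 =
 \int_\eta^X f(u)(u^{-\delta} - 1)\,\dd{u}
 +
 \int_X^\infty f(u) u^{-\delta}\,\dd{u},
\]
valid for $\delta > 0$ and $X > \eta$ to be chosen. Using $|1 - u^{-\delta}| \le \delta\log u$ together with the Tauberian bound, the first integral is at most $C\delta \log(X/\eta)$. For the second, the substitutions $v = \log u$ and then $w = \delta v$ reduce it to $\int_{\delta\log X}^{\infty} w^{-1} e^{-w}\,\dd{w}$, whose behavior as $\delta \log X \to 0^+$ is governed precisely by Lemma \ref{lem:4.1} (applied with $\delta$ rescaled), giving $-\log(\delta\log X) - \gamma + \oh[1]$.

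The main obstacle is that these two bounds pull $\delta \log X$ in opposite directions: making the first integral small wants $\delta\log X \to 0$, whereas making the second tail integral small wants $\delta\log X$ bounded away from zero. A single choice of $\delta$ as a function of $X$ cannot force both errors to vanish simultaneously. My plan for resolving this is a two-stage argument. First, using the decomposition with the calibrated choice $\delta \log X = 1$, the identity combined with boundedness of $J(\delta)$ near $\delta = 0$ yields that $F$ is bounded on $[\eta,\infty)$. This is already nontrivial, since the naive Tauberian bound only gives $|F(X)| = \Oh[\log\log X]$.

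With $F$ bounded and slowly oscillating in the $\log\log$ sense, I would then run a Cauchy criterion argument on $F$: for each $\epsilon > 0$, fix $\delta$ small enough that $|J(\delta) - L| < \epsilon$; then use the decomposition together with the slow-oscillation bound to compare $F(X)$ to $F(X')$ for $X'$ in a suitable window around $X$, and thereby to $J(\delta)$. The hardest part, as signaled above, is executing this coupling so that the tail integral is absorbed. This is the crux of Landau's original Tauberian argument, and it relies essentially on the exact asymptotic furnished by Lemma \ref{lem:4.1}; without that precision one cannot balance the two pieces of the decomposition.
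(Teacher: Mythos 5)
Your first stage is sound: with the calibration $\delta=1/\log X$, the decomposition together with $\sup_{0<\delta\le\delta_0}|J(\delta)|<\infty$ does give that $F(X)=\int_\eta^X f(u)\,du$ is bounded. But the proof stops exactly where the theorem begins. After the substitution $u=e^v$, $g(v)=e^vf(e^v)$, the hypotheses read $g(v)\ll 1/v$ and $\int g(v)e^{-\delta v}\,dv\to L$, and the conclusion asked for is convergence of $\int g(v)\,dv$ to $L$: this is precisely the integral form of Littlewood's $O(1/n)$ Tauberian theorem for Abel--Laplace means, and it cannot be reached by the mechanism you sketch in stage two. Any comparison of $F(X)$ with $J(\delta)$ through your decomposition produces the two error terms $O(\delta\log X)$ and the tail $\int_X^\infty f(u)u^{-\delta}\,du$, and the only control you have on the tail is the modulus bound $\int_{\delta\log X}^\infty w^{-1}e^{-w}\,dw$; this discards all sign and cancellation information, so no choice of $\delta$, of window $[X,X']$, or of Cauchy-criterion bookkeeping makes both terms small simultaneously --- which is the very tension you identified. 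In particular, the claim that the argument ``relies essentially on the exact asymptotic furnished by Lemma \ref{lem:4.1}'' is off the mark: that asymptotic can only ever yield boundedness, which you already have, never convergence to $L$. ``Bounded $+$ slowly oscillating $+$ Abel-convergent $\Rightarrow$ convergent'' is itself the hard Tauberian statement, not a routine consequence of the decomposition, so as written the crux is simply missing.

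To close the gap you need a genuinely Tauberian ingredient. For instance, with $\widetilde F(v)=\int_{\log\eta}^v g$, boundedness and integration by parts give $\delta\int \widetilde F(v)e^{-\delta v}\,dv\to L$; Karamata's polynomial-approximation argument (approximating the cutoff $\mathbf{1}_{[0,1]}$ by polynomials in $e^{-w}$) then yields the Ces\`aro statement $\frac{1}{T}\int^T\widetilde F(v)\,dv\to L$; finally the slow oscillation $|\widetilde F(v')-\widetilde F(v)|\le C\log(v'/v)$, which is exactly what $f(u)\ll(u\log u)^{-1}$ provides, upgrades Ces\`aro convergence of $\widetilde F$ to $\widetilde F(v)\to L$, i.e.\ $J(0)=L$. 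Note also that the paper itself does not prove this lemma: it quotes it as Landau's 1913 convergence criterion. So either cite it, as the paper does, or supply a complete argument along the Karamata lines above; the present sketch does neither.
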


\begin{proof}[Proof of Proposition \ref{prp:3.1}] 
The functions
\begin{align*}
  \mathcal{L}(s;q,a)
 & \defeq \textstyle
    \zeta(s)^{-1/\phi(q)}
     \prod_{\chi} L(s,\chi)^{\bar{\chi}(a)/\phi(q)} \\
 &\phantom{:}= \textstyle
   \prod_{p \mid q}\br{1 - \frac{1}{p^s}}^{1/\phi(q)}
    \prod_{\chi \ne \chi_0} L(s,\chi)^{\bar{\chi}(a)/\phi(q)}
\end{align*}
and
\[
 g(s;q,a) 
  \defeq 
   \phi(q)^{-1}\gamma
   + \log \mathcal{L}(s;q,a)
    - \underset{p^{\nu} \equiv a \bmod q}
               {\sum_{p}\sum_{\nu \ge 2}}
                \frac{1}{\nu p^{\nu s}}
\]
are defined and analytic throughout the region where 
$\sigma > \frac{1}{2}$ and $\prod_{\chi \ne \chi_0} L(s,\chi)$ is 
non-zero.
In particular, this region contains the half-plane $\sigma \ge 1$.
Note that as $s \to 1$, $\mathcal{L}(s;q,a) \to \mathcal{L}(q,a)$ 
and hence $g(s;q,a) \to g(q,a)$.

Using the orthogonality relation 
\begin{align}
 \label{eq:4.1}
  \begin{split}
 \sum_{\chi} \bar{\chi}(m)\chi(n)
 = 
\begin{cases}
  \phi(q) & \text{if $(q,m) = 1$ and $m \equiv n \bmod q$} \\
  0       & \text{otherwise,}
\end{cases}
 \end{split}
\end{align}
it is straightforward to verify that for $\sigma > 1$, 
\[
  \sums[p \equiv a \bmod q] p^{-s}
 = 
     \phi(q)^{-1}\br{\log \zeta(s) - \gamma} 
   +  g(s;q,a).  
\]
On the other hand, for $\sigma > 1$, partial summation yields
\[
 \sums[p \le y][p \equiv a \bmod q] p^{-\sigma}
 = y^{-\sigma}\pi(y;q,a) 
  + \phi(q)^{-1}\sigma 
     \int_2^y \frac{\li{t}}{t^{1+\sigma}} \dd{t}
  + \sigma \int_2^y \frac{E(t;q,a)}{t^{1+\sigma}} \dd{t},
\]
and since $\pi(t;q,a), |E(t;q,a)|, \li{t} < t$, letting $y$ tend 
to infinity yields
\[
 \sum_{p \equiv a \bmod q} p^{-\sigma}
 = 
  \phi(q)^{-1}\sigma 
   \int_2^{\infty} \frac{\li{t}}{t^{1+\sigma}} \dd{t}
  + \sigma \int_2^{\infty} \frac{E(t;q,a)}{t^{1+\sigma}} \dd{t}.
\]
Integration by parts, followed by the substitution $u = \log t$, 
followed by an application of Lemma \ref{lem:4.1}, followed by the 
Laurent series for $\zeta(s)$ about $s = 1$, which gives  
\begin{align}
 \label{eq:4.2}
 \zeta(s) = \frac{1}{s-1} + \gamma + \Oh[s - 1], 
  \quad 0 < |s-1| < 1,
\end{align}
yields
\begin{align*}
    \phi(q)^{-1}\sigma 
   \int_{2}^{\infty} \frac{\li{t}}{t^{1+\sigma}} \dd{t} 
& = 
   \phi(q)^{-1}  
   \int_{2}^{\infty} \frac{\dd{t}}{t^{\sigma}\log t}  
  = 
    \phi(q)^{-1}
  \int_{\log 2}^{\infty} u^{-1}\e^{-(\sigma - 1)u} \dd{u} \\
 & = 
   \textstyle
   \phi(q)^{-1} 
   \br{%
    \log{\br{\frac{1}{\sigma - 1}}} 
   - \gamma 
    - \log\log 2 
     + \oh[1][\sigma \to 1]%
     } \\
  & = 
  \textstyle
   \phi(q)^{-1}  
   \br{%
    \log{\zeta(\sigma)} 
   - \gamma 
    - \log\log 2 
     + \oh[1][\sigma \to 1]%
     }.
\end{align*}
Combining all of this, we obtain
\[
 \sigma \int_2^{\infty} \frac{E(t;q,a)}{t^{1+\sigma}} \dd{t}
 =  
   g(\sigma;q,a)  
  + \phi(q)^{-1}  
     \br{\log\log 2 
        + \oh[1][\sigma \to 1]}.
\]
Since $\pi(t;q,a), \li{t} \ll t(\log t)^{-1}$, we have
$t^{-2}E(t;q,a) \ll (t\log t)^{-1}$.
Therefore, by Lemma \ref{lem:4.2}, 
$\int_2^{\infty} t^{-2}E(t;q,a)\dd{t}$ converges, and 
\eqref{eq:3.2} follows.

Partial summation yields
\[
 \sums[p \le x][p \equiv a \bmod q] p^{-1}
  = 
   x^{-1}\pi(x;q,a) 
   + \phi(q)^{-1} 
      \int_2^x t^{-2}\li{t} \dd{t} 
     + \int_2^x t^{-2}E(t;q,a) \dd{t}.
\]
We have
$  x^{-1}\pi(x;q,a) 
 = \phi(q)^{-1} x^{-1}\li{x}
    + x^{-1} E(x;q,a);
$
integration by parts yields
\[
 \phi(q)^{-1} 
      \int_2^x t^{-2}\li{t} \dd{t}
 = 
   \phi(q)^{-1} 
    \br{-x^{-1}\li{x} + \log\log x - \log\log 2};
\]
and by \eqref{eq:3.2},
\[
 \int_2^x t^{-2}E(t;q,a) \dd{t}
 = 
  g(q,a) 
 + \phi(q)^{-1} \log\log 2
 - \int_x^{\infty} t^{-2}E(t;q,a) \dd{t}.
\]
Combining yields \eqref{eq:3.3}.
\end{proof}

\section{More about $g(q,a)$ and $G(q,a)$}
 \label{sec:5}

The main purpose of this section is to estimate $g(q,a)$ and 
$G(q,a)$, as defined in \eqref{eq:3.1} and \eqref{eq:3.4}.
The results here are more or less contained in one of Norton 
\cite[Lemma 6.3]{MR0419382}, and the proofs are similar. 
(Also see \cite[Theorem 1]{MR0447087}.)
The results of this section should be borne in mind when 
considering the range of uniformity in $q$ for which the results of 
\S \ref{sec:3} hold.
That is, for large $q$, the sums and products we are interested in 
are dominated by their first few terms, and estimates can become 
almost trivial. 

For instance, the main result of Languasco and Zaccagnini 
\cite[Theorem 2]{MR2351662}, \linebreak 
as it directly and explicitly accounts for the effect of a putative 
Siegel zero on multiples of an exceptional modulus in a given 
range, is rather precise and all-encompassing, although it takes 
some digesting.
Of course, as noted by Languasco and Zaccagnini, from their main 
result one may use Siegel's theorem to deduce our Theorem 
\ref{thm:3.4} (with corresponding Korobov-Vinogradov $\Oh$-terms).
However, they also offer the following statement 
\cite[Corollary 3]{MR2351662}: 
\[
 \prods[p \le x][p \equiv a \bmod q]\br{1 - \frac{1}{p}}^{-1}
  = G(q,a) \cdot (\e^{\gamma}\log x)^{1/\phi(q)}
            \cdot 
             \Br{1 
            + \Oh[\frac{(\log\log x)^{16/5}}{(\log x)^{3/5}}][A]}, 
\]
uniformly for $q \le \exp\Br{A(\log x)^{2/5}(\log\log x)^{1/5}}$ 
that are multiples of an exceptional modulus in the same range.
(Of course, this estimate holds for all $q$ in that range [indeed 
we have \eqref{eq:3.11} for {\em all} $q$], but for $q$ that are 
not multiples of an exceptional modulus we have the more precise 
estimate \eqref{eq:3.9}.)
But for {\em all} $q \le (\log x)^2$, say, we have \eqref{eq:3.9}, 
while for all $q > (\log x)^2$, a sharper estimate can be 
obtained without using any information about primes (see below).

Before proceeding, note that by using the orthogonality relations 
\eqref{eq:4.1} and
\begin{align}
 \label{eq:5.1}
 \begin{split}
 \sum_{1 \le m \le q} \bar{\chi}(m)
 = 
\begin{cases}
  \phi(q) & \text{if $\chi = \chi_0$} \\
  0       & \text{otherwise,}
\end{cases}
 \end{split}
\end{align}
it is straightforward to verify that 
\[
\sums[a \bmod q][(q,a) = 1] g(q,a)
 = g 
  - \sum_{p \mid q} p^{-1},
\]
where
\[
g 
 \defeq g(1,1)
  = \gamma - \sum_{p}\sum_{\nu \ge 2} \frac{1}{\nu p^\nu}
  = 0.26149\ldots
\]
is Mertens's constant.
We can use this to check consistency, for
\[
 E(t) 
  = 
   \sums[a \bmod q][(q,a) = 1] E(t;q,a)
  + \omega(t;q),
\]
where $\omega(t;q) \defeq \#\{p \le t : p \mid q\}$,
and putting this into \eqref{eq:3.1} gives
\begin{align*}
  \int_2^{\infty} t^{-2}E(t) \dd{t}
 & =
   \sums[a \bmod q][(q,a) = 1] g(q,a)
  + \log\log 2
   + \int_2^{\infty} t^{-2} \omega(t;q) \dd{t} \\
 & =    
    \sums[a \bmod q][(q,a) = 1] g(q,a)
   + \log\log 2
    + \sum_{p \mid q} p^{-1}.
\end{align*}
(To see the last equality, note that
\[
 \sum_{\text{$p \le y$ : $p \mid q$}} p^{-1}
  = y^{-1}\omega(y;q) 
   + \int_2^y t^{-2} \omega(t;q) \dd{t}
\]
by partial summation, and let $y$ tend to infinity.)
On the other hand, putting $q = 1$ into \eqref{eq:3.1} gives 
$\int_2^{\infty} t^{-2}E(t) \dd{t} = g + \log\log 2$. 

Recall that $q$ and $a$ are supposed to be positive integers such 
that $(q,a) = 1$ and $a \le q$.
Let
\begin{align}
 \label{eq:5.2}
  \begin{split}
 g^{*}(q,a)
  \defeq
  \begin{cases}
   a^{-1}  & \text{if $q \ge 2$ and $a$ is prime} \\
   0         & \text{otherwise,}
  \end{cases}
  \end{split}
\end{align}
and let
\begin{align}
 \label{eq:5.3}
  \begin{split}
 G^{*}(q,a)
  \defeq
  \begin{cases}
   \br{1 - \frac{1}{a}}^{-1}   
      & \text{if $q \ge 2$ and $a$ is prime} \\
   1                          
      & \text{otherwise.}
  \end{cases}
  \end{split}
\end{align}
Elementarily, for $q < x$ we have
\[
 \sums[q < p \le x][p \equiv a \bmod q] p^{-1}
  \le \sum_{n \le x/q} (qn)^{-1}
   \ll q^{-1}\log(x/q),
\]
hence
\begin{align*}
 \sums[p \le x][p \equiv a \bmod q] p^{-1}
  = 
 g^{*}(q,a) + 
     \begin{cases}
      \Oh\br{q^{-1}\log(x/q)} & \text{if $q < x$} \\
       0                      & \text{if $q \ge x$,}
    \end{cases}
\end{align*}
and similarly,
\begin{align*}
 G^{*}(q,a)
  \le 
   \prods[p \le x][p \equiv a \bmod q]\br{1 - \frac{1}{p}}^{-1}
    \le
 G^{*}(q,a)\times
    \begin{cases}
      \exp\br{cq^{-1}\log(x/q)} & \text{if $q < x$} \\
      1                         & \text{if $q \ge x$.}
    \end{cases}
\end{align*}
Thus, for $c\log(x/q) < q < x$ we have
\[
 \prods[p \le x][p \equiv a \bmod q]\br{1 - \frac{1}{p}}^{-1}
  = G^{*}(q,a)\cdot \Br{1 + \Oh[q^{-1}\log(x/q)]}.
\]
This is entirely elementary and uses no information about primes.

Using the Brun-Titchmarsh inequality, which states that for 
$q < t$,  
\[
  \pi(t;q,a) \ll \frac{t}{\phi(q)\log(3t/q)},
\]
and partial summation, it is straightforward to show that for 
$q < x$,  
\[
 \sums[q < p \le x][p \equiv a \bmod q] p^{-1}
  \ll \phi(q)^{-1}\log\log(3x/q),
\]
from which we can deduce the following result.
\begin{theorem}
 \label{thm:5.1}
Let $x \ge 2$ and let $q$ and $a$ be positive, coprime integers 
with $a \le q$.
We have
\begin{align}
 \label{eq:5.4}
 \sums[p \le x][p \equiv a \bmod q] p^{-1}
  = 
   g^{*}(q,a) +
    \begin{cases}
      \Oh\br{\phi(q)^{-1}\log\log(3x/q)} & \text{if $q < x$} \\
       0                                 & \text{if $q \ge x$,}
    \end{cases}
\end{align}
where the implicit constant is absolute, and
\begin{align}
 \label{eq:5.5}
  \begin{split}
  G^{*}(q,a) 
   & \le
    \prods[p \le x][p \equiv a \bmod q]\br{1 - \frac{1}{p}}^{-1} \\
   & \hspace{45pt} \le 
 G^{*}(q,a) \times 
  \begin{cases}
   \exp\br{c\phi(q)^{-1}\log\log(3x/q)} 
        & \text{if $q < x$} \\
      1                                      
        & \text{if $q \ge x$,}
  \end{cases}
  \end{split}
\end{align}
where $c > 0$ is an absolute constant. 
Here, $g^{*}(q,a)$ and $G^{*}(q,a)$ are as in \eqref{eq:5.2} and 
\eqref{eq:5.3} respectively.
\end{theorem}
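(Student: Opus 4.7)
The plan is to deduce both \eqref{eq:5.4} and \eqref{eq:5.5} from the Brun-Titchmarsh inequality via the key estimate
\[
 \sums[q < p \le x][p \equiv a \bmod q] p^{-1}
  \ll \phi(q)^{-1}\log\log(3x/q) \qquad (q < x),
\]
which is already flagged in the text just above Theorem \ref{thm:5.1}. Once this is in hand, \eqref{eq:5.4} will follow by peeling off the unique prime $p \le q$ with $p \equiv a \bmod q$ as the main term $g^{*}(q,a)$, and \eqref{eq:5.5} will follow by taking logarithms and exponentiating.

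To prove the key estimate I apply Brun-Titchmarsh in the form $\pi(t;q,a) \ll t/(\phi(q)\log(3t/q))$, valid for $t > q$, and use partial summation. Setting $\pi^{*}(t) = \pi(t;q,a) - \pi(q;q,a)$,
\[
 \sums[q < p \le x][p \equiv a \bmod q] p^{-1}
  = \frac{\pi^{*}(x)}{x} + \int_{q}^{x} \frac{\pi^{*}(t)}{t^{2}} \dd{t}.
\]
The boundary term is $\Oh[\phi(q)^{-1}]$ because $\log(3x/q) \ge \log 3$ for $q < x$. For the integral, Brun-Titchmarsh gives an integrand $\ll 1/(\phi(q)\,t\log(3t/q))$, and the substitution $u = \log(3t/q)$ converts $\int_{q}^{x} \dd{t}/(t\log(3t/q))$ into $\int_{\log 3}^{\log(3x/q)} \dd{u}/u = \log\log(3x/q) - \log\log 3$; since $\log\log(3x/q) \ge \log\log 3 > 0$ throughout $q < x$, the two contributions combine to give the claim. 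For \eqref{eq:5.4} I then split the full sum at $p = q$: as $1 \le a \le q$ and $(q,a) = 1$, the only prime $p \le q$ with $p \equiv a \bmod q$ is $p = a$ itself, contributing $1/a$ precisely when $q \ge 2$ and $a$ is prime, which is the definition of $g^{*}(q,a)$. The remainder is empty when $q \ge x$ and is controlled by the key estimate otherwise.

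For \eqref{eq:5.5} I take logarithms and use the bound $-\log(1 - 1/p) \le 2/p$ for $p \ge 2$ (equivalently, $-\log(1-1/p) = 1/p + \Oh[1/p^{2}]$). When $p = a$ is present it contributes $-\log(1 - 1/a) = \log G^{*}(q,a)$ exactly; any further prime $p \equiv a \bmod q$ satisfies $p \ge a + q > q$ and so lies in the tail, which the key estimate bounds by $\Oh[\phi(q)^{-1}\log\log(3x/q)]$. Exponentiating gives the upper bound in \eqref{eq:5.5}, while the lower bound is immediate from retaining only the factor $p = a$. The only real obstacle is the mild computational one of arranging the partial summation to return a $\log\log$ rather than a $\log$; this hinges on the $\log(3t/q)$ appearing in the Brun-Titchmarsh denominator, without which $1/(t\log(3t/q))$ would only integrate to $\log$. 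No input from prime number theory beyond Brun-Titchmarsh is required.
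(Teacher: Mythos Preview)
Your proposal is correct and follows precisely the route the paper sketches: Brun--Titchmarsh plus partial summation to obtain $\sum_{q<p\le x,\ p\equiv a\bmod q} p^{-1}\ll \phi(q)^{-1}\log\log(3x/q)$, then splitting off the (at most one) prime $p\le q$ to recover $g^{*}(q,a)$ and $G^{*}(q,a)$. The paper leaves the deduction of \eqref{eq:5.4}--\eqref{eq:5.5} from the key estimate as ``straightforward'', and your details (the substitution $u=\log(3t/q)$, the bound $-\log(1-1/p)\le 2/p$, and the observation that the only $p\le q$ with $p\equiv a\bmod q$ is $p=a$) fill this in correctly.
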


Thus, if \eqref{eq:3.4} and \eqref{eq:3.5} are to 
hold with $\phi(q)$ growing faster than $\log\log x$, for a given 
$a$ we must have $g(q,a) \to g^{*}(q,a)$ and 
$G(q,a) \to G^{*}(q,a)$ as $q$ tends to infinity over integers that 
are coprime with $a$. 

\begin{theorem}
 \label{thm:5.2}
Let $q$ and $a$ be positive, coprime integers.
Let $g(q,a)$, $g^{*}(q,a)$, $G(q,a)$ and $G^{*}(q,a)$ be as in 
\eqref{eq:3.1}, \eqref{eq:5.2}, \eqref{eq:3.4} and \eqref{eq:5.3} 
respectively.
We have
\begin{align}
 \label{eq:5.6}
 g(q,a) = g^{*}(q,a) +  \Oh[\phi(q)^{-1}\log q],
\end{align}
and
\begin{align}
 \label{eq:5.7}
 G(q,a) = G^{*}(q,a)\cdot \Br{1 + \Oh[\phi(q)^{-1}\log q]}.
\end{align}
The implicit constants are absolute.  
\end{theorem}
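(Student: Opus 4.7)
My plan for \eqref{eq:5.6} is to apply Proposition \ref{prp:3.1}, specifically equation \eqref{eq:3.3}, at $x = q$. The key elementary observation is that for $q \ge 2$ (and $1 \le a \le q$ with $(q,a) = 1$), the only prime $p \le q$ satisfying $p \equiv a \bmod q$ is $p = a$, and this occurs precisely when $a$ is itself prime. Thus $\sum_{p \le q,\, p \equiv a \bmod q} p^{-1} = g^{*}(q,a)$ as an exact identity, and rearranging \eqref{eq:3.3} yields
\[
  g(q,a) - g^{*}(q,a)
  = -\phi(q)^{-1} \log\log q
     - q^{-1} E(q;q,a)
     + \int_q^{\infty} t^{-2} E(t;q,a)\, dt.
\]
(The case $q = 1$ is trivial: then $a = 1$, $g^{*}(1,1) = 0$, and $g(1,1) = g$ is bounded absolutely.)

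The first two terms on the right-hand side I expect to handle directly. Clearly $\phi(q)^{-1}\log\log q \le \phi(q)^{-1}\log q$, and using $\pi(q;q,a) \le 1$ together with $\li{q} \ll q/\log q$ one gets $q^{-1}|E(q;q,a)| \ll \phi(q)^{-1}$, comfortably inside the target error.

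The main obstacle is the tail integral, which I would estimate via the partial-summation identity
\[
  \int_q^x t^{-2} E(t;q,a)\, dt
  = q^{-1} E(q;q,a) - x^{-1} E(x;q,a)
     + \sum_{q < p \le x,\, p \equiv a \bmod q} p^{-1}
     - \phi(q)^{-1}\bigl(\log\log x - \log\log q\bigr),
\]
apply the Brun--Titchmarsh-based bound \eqref{eq:5.4} of Theorem \ref{thm:5.1} to the prime sum, and then pass to $x \to \infty$ using the convergence guaranteed by Proposition \ref{prp:3.1}. The technical subtlety is that \eqref{eq:5.4} controls the prime sum only by an absolute constant times $\phi(q)^{-1}\log\log(3x/q)$, which is of the same order as the $\phi(q)^{-1}\log\log x$ to be subtracted; capturing the leftover as $O(\phi(q)^{-1}\log q)$ requires exploiting the cancellation built into the identity rather than any further input on $E(t;q,a)$.

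To deduce \eqref{eq:5.7} from \eqref{eq:5.6}, I would compare \eqref{eq:3.1} and \eqref{eq:3.4} directly:
\[
  \log G(q,a) - g(q,a)
  = -\phi(q)^{-1}\gamma
     + \sum_{p \equiv a \bmod q}\sum_{\nu \ge 2}\frac{1}{\nu p^{\nu}},
\]
and parallel reasoning gives $\log G^{*}(q,a) - g^{*}(q,a) = \sum_{\nu \ge 2}(\nu a^{\nu})^{-1}$ when $q \ge 2$ and $a$ is prime, and $0$ otherwise. Splitting the prime-power sum at $p = a$ (the unique contribution from $p \le q$) and bounding the tail over $p > q$ by $O(q^{-1}(\log q)^{-1})$ via Chebyshev, one obtains $\log G(q,a) - \log G^{*}(q,a) = g(q,a) - g^{*}(q,a) + O(\phi(q)^{-1})$. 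Combining with \eqref{eq:5.6} and exponentiating then yields \eqref{eq:5.7}.
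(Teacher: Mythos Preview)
Your starting point matches the paper's: both use the integral representation of $g(q,a)$ from Proposition \ref{prp:3.1} and the exact identity $\sum_{p\le q,\,p\equiv a\bmod q}p^{-1}=g^{*}(q,a)$. Your deduction of \eqref{eq:5.7} from \eqref{eq:5.6} is also essentially the paper's.

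The gap is in your treatment of the tail $\int_q^\infty t^{-2}E(t;q,a)\,dt$. Your partial-summation identity is correct, but it is circular: letting $x\to\infty$ and using only that $x^{-1}E(x;q,a)\to 0$, the identity collapses back to
\[
 \int_q^\infty t^{-2}E(t;q,a)\,dt
 = q^{-1}E(q;q,a)+\phi(q)^{-1}\log\log q
   +\lim_{x\to\infty}\Bigl(\sum_{\substack{q<p\le x\\p\equiv a\bmod q}}p^{-1}-\phi(q)^{-1}\log\log x\Bigr),
\]
and the limit on the right is precisely $g(q,a)-g^{*}(q,a)$. So ``exploiting the cancellation built into the identity'' is exactly what you are trying to prove. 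Brun--Titchmarsh alone cannot close this: it only gives $|E(t;q,a)|\ll \phi(q)^{-1}t/\log(3t/q)$, and $\int_q^\infty (t\log(3t/q))^{-1}\,dt$ diverges.

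What the paper does---and what you are missing---is to split the tail at $t=\exp(q)$. On $[q,\exp(q)]$ the Brun--Titchmarsh bound \emph{on the integral itself} gives
\[
 \int_q^{\exp(q)}\frac{dt}{t\log(3t/q)}\ll \log q,
\]
while on $[\exp(q),\infty)$ one invokes the prime number theorem for arithmetic progressions (since $q\le\log t$ there) to get $E(t;q,a)\ll t\,\e^{-c\sqrt{\log t}}$ and hence a contribution $\ll\e^{-c\sqrt{q}}$. Some genuine input on $E(t;q,a)$ beyond Brun--Titchmarsh is unavoidable for the far tail; the paper remarks that a weak, effective form suffices, but the step cannot be skipped.
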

\begin{proof}
Suppose, as we may, that $q \ge 3$. 
By \eqref{eq:3.1} we have
\[
 g(q,a)
  =
  \Br{\int_2^q + \int_q^{\exp(q)} 
   + \int_{\exp(q)}^{\infty}} t^{-2}E(t;q,a) \dd{t} 
   - \phi(q)^{-1}\log\log 2.
\]
If $a$ is not prime, then
\begin{align*}
 \int_2^q t^{-2}E(t;q,a) \dd{t}
 & =
  -\phi(q)^{-1}\int_2^q t^{-2}\li{t} \dd{t} \\
 & = 
   -\phi(q)^{-1}
    \br{-q^{-1}\li{q} + \log\log q - \log\log 2},
\end{align*}
whereas if $a$ is prime, the left-hand side is equal to this 
plus
\[
 \int_a^q t^{-2} \dd{t} = a^{-1} - q^{-1}.
\]
Whether or not $a$ is prime, by the Brun-Titchmarsh inequality 
we have
\[
 \int_q^{\exp(q)} t^{-2}E(t;q,a) \dd{t}
  \ll \phi(q)^{-1}
       \int_q^{\exp(q)} (t\log(3t/q))^{-1} \dd{t}
    \ll \phi(q)^{-1}\log q.
\]
Also, by the Siegel-Walfisz theorem we have
\[
 \int_{\exp(q)}^{\infty} t^{-2}E(t;q,a) \dd{t}
  \ll \int_{\exp(q)}^{\infty} t^{-1}\e^{-c\sqrt{\log t}} \dd{t}
   \ll \e^{-c\sqrt{q}}.
\]
Combining gives the result for $g(q,a)$.

By putting this into the definition of $G(q,a)$, we 
deduce the result for $G(q,a)$ by noting that
\[
 \sum_{p \equiv a \bmod q} 
  \sum_{\nu \ge 2} \frac{1}{\nu p^{\nu}} 
 = \sum_{p \equiv a \bmod q} 
    \textstyle
     \br{\log\br{1 - \frac{1}{p}}^{-1} - \frac{1}{p}},
\] 
and that 
\[
 \sum_{p > q} \sum_{\nu \ge 2} \frac{1}{\nu p^{\nu}} 
  \le \sum_{n > q} n^{-2}
   \ll q^{-1}.
\]
\end{proof}

Two remarks.
First, we do not use the full strength of the Siegel-Walfisz 
theorem in the above proof, and the implicit constants involved are 
effectively computable.
Second, if we assume the aforementioned conjecture of Montgomery, 
GRH, or just that $q$ is not an exceptional modulus, 
Theorem \ref{thm:5.2} can be improved slightly.
Namely, as the reader may verify, the $\Oh$-terms in 
\eqref{eq:5.6} and \eqref{eq:5.7} can be replaced 
by $\Oh[\phi(q)^{-1}\log\log q]$ (for $q \ge 3$).

We end by noting the following interpretation of the constant 
$g(q,a)$ in the special case where $q \ge 3$ and 
$a \equiv 1 \bmod q$.
Recall that
\[
 \mathcal{L}(q,1)^{\phi(q)} \textstyle
  \defeq \br{\frac{\phi(q)}{q}}
          \prod_{\chi \ne \chi_0} L(1,\chi)
 = \lim_{s \to 1} \zeta(s)^{-1}\prod_{\chi} L(s,\chi).
\]
The right-hand side is equal to $\Res{s=1} \zeta_L(s)$, where 
$\zeta_L(s)$ is the Dedekind zeta-function of the $q$-th cyclotomic 
field $L \defeq \Q(\e^{2\pi i/q})$.
(Cf.~\S\ref{sec:10}.)
By the {\em analytic class number formula} for $L$, 
\[
 \Res{s=1} \zeta_L(s)
  = \frac{2^{r_1}(2\pi)^{r_2}h_L\Reg{L}}{w_L\sqrt{|\Delta_L|}}, 
\]
where: $\dgr{L} = r_1 + 2r_2$, $r_1$ and $2r_2$ respectively 
denoting the number of real and complex embeddings of $L$;
$h_L$ is the {\em class number} of $L$;
$\Reg{L}$ is the {\em regulator} of $L$; and
$w_L$ is the number of roots of unity in $L$.
Recalling the definition \eqref{eq:3.1} of $g(q,1)$, then 
\eqref{eq:3.2} and \eqref{eq:5.6}, we have shown that 
\begin{align*}
 \textstyle\frac{1}{\phi(q)}
  \log\br{\frac{2^{r_1}(2\pi)^{r_2}h_L\Reg{L}}
               {w_L\sqrt{|\Delta_L|}}}
 & = g(q,1) - \phi(q)^{-1}\gamma 
    + \underset{p^{\nu} \equiv 1 \bmod q}
               {\sum_{p}\sum_{\nu \ge 2}}
                 \frac{1}{\nu p^{\nu}} \\
 & = \int_2^{\infty} t^{-2}E(t;q,1) \dd{t}
      - \phi(q)^{-1}(\gamma + \log\log 2) \\
 & \hspace{120pt}
       + \underset{p^{\nu} \equiv 1 \bmod q}
               {\sum_{p}\sum_{\nu \ge 2}}
                 \frac{1}{\nu p^{\nu}} \\
 & = \underset{p^{\nu} \equiv 1 \bmod q}
               {\sum_{p}\sum_{\nu \ge 2}}
                 \frac{1}{\nu p^{\nu}} 
     + \Oh[\phi(q)^{-1}\log q].
\end{align*}
As remarked following the proof of Theorem \ref{thm:5.2}, the 
$\Oh$-term here can be replaced by $\Oh[\phi(q)^{-1}\log\log q]$ 
for non-exceptional moduli $q$.
See \S\ref{sec:10} for an extension of this.

\section{Integers composed of primes in an arithmetic progression}
 \label{sec:6}

In this section we digress slightly to estimate a quantity closely 
related to Mertens's theorem for primes in an arithmetic 
progression, namely the number of integers up to $x$ that are 
composed only of primes in a given arithmetic progression.
We add the ``twist'' that the primes must also exceed a number $y$, 
$1 \le y \le (\log x)^A$. 
 
For numbers $x,y \ge 1$ and positive, coprime integers $q,a$, let
\[
 S(y) 
 \defeq 
  \{n \in \mathbb{N} : p \mid n \Rightarrow p > y\}, 
   \quad 
 S(q,a) 
  \defeq
   \{n \in \mathbb{N} : p \mid n \Rightarrow p \equiv a \bmod q\},
\]
let
\[
 \Phi(x,y) \defeq \#\{n \le x : n \in S(y)\}, \quad
 \Phi(x;q,a) \defeq \#\{n \le x : n \in S(q,a)\},
\]
and let
\[
\Phi(x,y;q,a) \defeq \#\{n \le x : n \in S(y) \cap S(q,a)\}.
\]
Recall that for $\sigma > 0$, the improper integral 
\[
 \Gamma(s) \defeq \int_0^{\infty} \e^{-u}u^{s - 1} \dd{s}
\]
converges.
The Gamma function is obtained by extending this integral function 
analytically to all $s \ne 0, -1, -2, \ldots$ (the non-positive 
integers are simple poles).

In the following theorem, part (a) is a generalization (to $y > 1$) 
of an estimate proved in \cite[p.125]{MR1397501}, and the proof is 
the same.
Part (b) is proved in the same way, the only difference being that 
we keep track of the dependence on $q$ in our estimates.

\begin{theorem}
 \label{thm:6.1}
Fix $A > 0$. 
Let $x$ and $y$ be numbers satisfying $1 \le y \le (\log x)^A$.
Let $q$ and $a$ be positive, coprime integers, and let $G(q,a)$ be 
as in \eqref{eq:3.4}. 
(a) For any $\epsilon > 0$ we have, uniformly for $y$ in the given 
range and $q \ge 1$, 
\begin{align}
 \label{eq:6.1}
  \begin{split}
 \Phi(x,y;q,a)
  & = x(\log x)^{1/\phi(q) - 1}  \\
  & \hspace{30pt} \times\textstyle
     \Br{\frac{G(q,a)}{\Gamma(1/\phi(q))}
      \prods[p \le y][p \equiv a \bmod q]\br{1 - \frac{1}{p}}
     + \Oh[(\log x)^{\epsilon-1}][\epsilon,A]}.
 \end{split}
\end{align}
The implicit constant depends on $\epsilon$ and $A$ at most.
(b) For any $\epsilon > 0$ we have, for $y$ in the given range and 
$1 \le q \le (\log x)^A$,
\begin{align}
 \label{eq:6.2}
  \begin{split}
 \Phi(x,y;q,a)
  & = x(\log x)^{1/\phi(q) - 1}  \\
  & \hspace{30pt} \times\textstyle
     \Br{\frac{G(q,a)}{\Gamma(1/\phi(q))}
      \prods[p \le y][p \equiv a \bmod q]\br{1 - \frac{1}{p}}
     + \Oh[\frac{ q^{\epsilon} (\log y)^{1 + 2/\phi(q)} }
                {\log x}][\epsilon,A]}.
 \end{split}
\end{align}
The implicit constant depends on $\epsilon$ and $A$ at most.
(c) We have, uniformly for $y$ in the given range and 
$1 \le q \le (\log\log x)^A$, 
\begin{align}
 \label{eq:6.3}
  \begin{split}
 \Phi(x,y;q,a)
  & = \textstyle
   x(\log x)^{1/\phi(q) - 1} 
     \frac{G(q,a)}{\Gamma(1/\phi(q))}  
     \prods[p \le y][p \equiv a \bmod q]\br{1 - \frac{1}{p}} \\
  & \hspace{120pt} \times\textstyle
    \Br{1 + \Oh[\frac{(\log\log x)^{A+3}}{\log x}][A] }.
 \end{split}
\end{align}
The implicit constant depends on $A$ at most.
\end{theorem}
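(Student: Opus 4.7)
The approach is the Selberg--Delange method, with Mertens's theorem for primes in an arithmetic progression (Theorem \ref{thm:3.4}) supplying the arithmetic input. Set $\alpha \defeq 1/\phi(q)$ and consider the Dirichlet series
\[
 F_y(s;q,a) \defeq \sum_{n \in S(y) \cap S(q,a)} n^{-s} = \prod_{\substack{p > y \\ p \equiv a \bmod q}} (1 - p^{-s})^{-1}, \qquad \sigma > 1.
\]
Mimicking the orthogonality computation from the proof of Proposition \ref{prp:3.1}, one can write $F_y(s;q,a) = \zeta(s)^{\alpha} H_y(s;q,a)$, where $H_y(s;q,a)$ is analytic throughout the region where $\prod_\chi L(s,\chi)$ is non-zero (in particular, on an open neighborhood of the half-plane $\sigma \ge 1$ with $s=1$ removed). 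A direct calculation using the definitions of $g(q,a)$ and $G(q,a)$ gives
\[
 H_y(1;q,a) = G(q,a) \prod_{\substack{p \le y \\ p \equiv a \bmod q}} (1 - 1/p),
\]
which is exactly the constant standing in front of $x(\log x)^{1/\phi(q)-1}$ in the theorem.

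Second, I would invoke the Selberg--Delange theorem in the form given in \cite[III.5]{MR1397501} to convert the Dirichlet-series singularity at $s=1$ into an asymptotic for $\Phi(x,y;q,a)$. The main term $H_y(1;q,a)\,x(\log x)^{\alpha-1}/\Gamma(\alpha)$ arises from a Hankel contour around $s=1$, and the error is controlled by shifting the Perron contour just inside a zero-free region for $\prod_\chi L(s,\chi)$. For part (a) (with $q$ fixed), this is essentially the computation on p.~125 of Tenenbaum, the only new ingredient being the $y$-roughness factor $\prod_{p \le y, p \equiv a \bmod q}(1 - p^{-s})$, which is entire and elementary to bound.

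For parts (b) and (c), the main obstacle is to propagate the $q$- and $y$-dependence cleanly through the contour integration. This requires a zero-free region of Siegel--Walfisz type for $\prod_\chi L(s,\chi)$ uniform in $q$, together with convexity bounds on $|L(s,\chi)|$ inside that region, so that $H_y(s;q,a)$ and its derivatives along the Hankel loop are bounded with explicit $q$- and $y$-dependence. The factor $q^{\epsilon}$ in (b) then stems from the convexity bound for $L(s,\chi)$ on the critical contour, while $(\log y)^{1+2/\phi(q)}$ arises from bounding $|\prod_{p \le y, p \equiv a \bmod q}(1 - p^{-s})^{\pm 1}|$ there. In (c), the tighter range $q \le (\log\log x)^{A}$ allows absorbing $q^{\epsilon}$ into $(\log\log x)^{O(A)}$, and a Taylor expansion of $(\log x)^{1/\phi(q)}$ inside the integral produces the stated $(\log\log x)^{A+3}/\log x$ error. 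The argument parallels Tenenbaum's throughout; the substantive work is purely the bookkeeping of $q$-uniformity at each estimate.
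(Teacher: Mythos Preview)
Your outline for part (b) is essentially the paper's proof: Perron's formula, the factorization $F(s)=\zeta(s)^{1/\phi(q)}G(s;q,a)\prod_{p\le y,\,p\equiv a}(1-p^{-s})$, a Siegel-type zero-free region $\sigma\ge 1-\kappa/(q^{\epsilon^*}\max\{1,\log|\tau|\})$, and a truncated Hankel contour around $s=1$. The identifications you make for $H_y(1;q,a)$ and for the sources of $q^{\epsilon}$ and $(\log y)^{1+2/\phi(q)}$ match the paper's bounds (6.6)--(6.7) exactly.

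There is, however, a real gap in your treatment of part (a). You write ``with $q$ fixed,'' but (a) asserts uniformity over \emph{all} $q\ge 1$, and the Selberg--Delange argument cannot deliver this: the zero-free region, and hence the contour shift, degrades with $q$. The paper handles $q$ large (say $q\gg(\log x)^{B+1}$) by an entirely separate elementary argument: if $n\in S(q,a)$ and $\Omega(n)\le C\log\log x$ then $n$ lies in one of $O(\log\log x)$ residue classes mod $q$, while by Hardy--Ramanujan the integers with $\Omega(n)>C\log\log x$ are negligible. This gives $\Phi(x;q,a)\ll_B x(\log x)^{-B}$ for such $q$, and one then checks that the right-hand side of (6.1) is of the same order (using $\Gamma(1/\phi(q))\asymp\phi(q)$ and $G(q,a)\asymp 1$). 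Only after this reduction does the paper invoke (b). Your proposal is missing this step.

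For part (c), no Taylor expansion of $(\log x)^{1/\phi(q)}$ is needed or used. The paper simply substitutes $q\le(\log\log x)^A$ into (6.2) and factors the main term out of the bracket, using $G(q,a)^{-1}\asymp 1$, $\Gamma(1/\phi(q))^{-1}\asymp\phi(q)$, and $\prod_{p\le y,\,p\equiv a}(1-1/p)^{-1}\ll(\log y)^{1/\phi(q)}$; the exponent $A+3$ just records $q^{\epsilon}\phi(q)(\log y)^{1+3/\phi(q)}$ under these constraints.
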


\begin{proof}
(a) Let $\Omega(n)$ be the number of prime divisors of $n$, counted 
with multiplicity.
Note that for all $x \ge 3$, $y \ge 1$, $q \ge 1$, and any number 
$C > 0$, 
\[
\Phi(x,y;q,a)
 \le \Phi(x;q,a)
  \le \sum_{\substack{n \in [1,x] \cap S(q,a) 
              \\ \Omega(n) \le C\log\log x}} 1
      + \sums[n \le x][\Omega(n) > C\log\log x] 1.
\]
Now if $n \in S(q,a)$ and $\Omega(n) \le C\log\log x$, then we have
$n \equiv a^{\nu} \bmod q$ for some $\nu \le C\log\log x$, and by 
the Hardy-Rananujan inequality 
(cf.~\cite[III, Exercise 3.1(d)]{MR1342300}), for any given 
constant $B > 0$, we may choose $C = C(B)$ so that 
$\#\{n \le x : \Omega(n) > C\log\log x\} \ll_B x(\log x)^{-B}$.
Thus,  
\[
  \Phi(x;q,a)
   \ll_B  \frac{x\log\log x}{q} + \frac{x}{(\log x)^B}.
\]
If $q \gg_B (\log x)^{B+1}$, we have
$
 \Phi(x;q,a) \ll_{B} x(\log x)^{-B}. 
$

On the other hand, since $\Gamma(1/\phi(q)) \asymp \phi(q)$, and 
since $G(q,a) \asymp 1$ by \eqref{eq:5.7}, \eqref{eq:6.1} is 
equivalent, for $q \gg_B (\log x)^{B+1}$, to the weaker bound
\[
 \Phi(x,y;q,a) 
  \ll_{\epsilon,A,B} x(\log x)^{\epsilon - 2}.
\]
Thus we may assume, without loss of generality, that 
$q \le (\log x)^A$, in which case \eqref{eq:6.1} follows from 
\eqref{eq:6.2}, which we will now establish.

(b) The cases with $q,y < 3$ are trivial.
If $q < 3 \le y \le (\log x)^A$, we have
\begin{align}
 \label{eq:6.4}
  \Phi(x,y;q,a) 
 = \Phi(x,y)
  = \textstyle x\Br{\prod_{p \le y}\br{1 - \frac{1}{p}}
   + \Oh[\e^{-c\sqrt{\log x}}][A]}.
\end{align}
This follows from the analysis below, but is more straightforward 
because $s = 1$ is a simple pole, rather than a branch point, of 
the function $F(s)$ (defined in \eqref{eq:6.5}).
It also follows from stronger results of de Bruijn 
\cite{MR0035785} on $y$-pliable numbers.%
\footnote{%
Natural numbers all of whose prime factors are no greater than $y$ 
are often called $y$-{\em friable} numbers.
We propose that natural numbers all of whose prime factors 
{\em are} greater than $y$ be called $y$-{\em pliable} numbers.
}

Thus, we consider $3 \le q, y \le (\log x)^A$.
Of course, without loss of generality, we assume that 
$1 \le a \le q-1$.
For $\sigma > 1$, we define
\begin{align*}
F(s) 
 = F(s,y;q,a)
 & \defeq \sum_{n \in S(y) \cap S(q,a)} n^{-s} \\
 & \phantom{:}=
   \prod_{p \equiv a \bmod q}\br{1 - \frac{1}{p^s}}^{-1}
    \prods[p \le y][p \equiv a \bmod q]\br{1 - \frac{1}{p^s}}.
\end{align*}
The last product is clearly analytic and non-zero for $\sigma > 0$,
and using orthogonality relations for Dirichlet characters 
\eqref{eq:4.1}, \eqref{eq:5.1}, it is straightforward to verify 
that for $\sigma > 1$, 
\[
 \prod_{p \equiv a \bmod q}\br{1 - \frac{1}{p^s}}^{-1}
 = \zeta(s)^{1/\phi(q)}G(s;q,a),
\]
where
\begin{align*}
 G(s;q,a)
  & \defeq
     \zeta(s)^{-1/\phi(q)} \textstyle
      \prod_{\chi}  L(s,\chi)^{\bar{\chi}(a)/\phi(q)} \\
  & \hspace{60pt}\times
  \textstyle 
    \exp
     \Br{%
        \sum_{p \equiv a \bmod q}\sum_{\nu \ge 2}
         \frac{1}{\nu p^{\nu s}}
        -
        \underset{p^{\nu} \equiv a \bmod q}
               {\sum_{p}\sum_{\nu \ge 2}}
                 \frac{1}{\nu p^{\nu s}}
        } \\
  &\phantom{:}=
   {\textstyle
   \prod_{p \mid q}\br{1 - \frac{1}{p^s}}^{1/\phi(q)}
   \prod_{\chi \ne \chi_0} L(s,\chi)^{\bar{\chi}(a)/\phi(q)}
   } \\
  & \hspace{60pt}\times
  \textstyle 
    \exp
     \Br{%
        \sum_{p \equiv a \bmod q}\sum_{\nu \ge 2}
         \frac{1}{\nu p^{\nu s}}
        -
        \underset{p^{\nu} \equiv a \bmod q}
               {\sum_{p}\sum_{\nu \ge 2}}
                 \frac{1}{\nu p^{\nu s}}
        }.
\end{align*}
Note that $\exp\Br{\cdots} \asymp 1 + \delta^{-1}$ for 
$\sigma \ge \frac{1}{2} + \delta$ and any $\delta > 0$.
Thus, $F(s)$ admits an analytic continuation to any region for 
which $\sigma > \frac{1}{2}$ and 
$\prod_{\chi} L(s,\chi)$ is zero-free, excluding the branch point 
$s = 1$.
Namely, 
\begin{align}
 \label{eq:6.5}
 F(s) = (s-1)^{-1/\phi(q)}((s-1)\zeta(s))^{1/\phi(q)} G(s;q,a)
         \prods[p \le y][p \equiv a \bmod a]\br{1 - \frac{1}{p^s}}.
\end{align}

We may and do suppose that $A > 1$. 
Fix $\epsilon \in (0,\frac{1}{2})$, arbitrarily small, and let 
$\epsilon^{*} = \frac{\epsilon}{2A}$, so that 
$q^{\epsilon} \le (\log x)^{\epsilon/2} < (\log x)^{1/4}$.
Let $\kappa = \kappa(\epsilon^{*})$ be a positive constant, 
depending only on $\epsilon^{*}$, such that 
\[
 \mathscr{R}_{4\kappa} 
  \defeq 
   \Br{s = \sigma + i\tau : 
     \sigma \ge  
  1 - \frac{4\kappa}{q^{\epsilon^{*}}\cdot\max\{1, \log|\tau|\}}}
\]
is a zero-free region for $\prod_{\chi} L(s,\chi)$.
(By Siegel's theorem, such a constant, $\kappa$, exists but is 
ineffective.)
Define $\mathscr{R}_{2\kappa}$ and $\mathscr{R}_{\kappa}$ 
similarly.

We claim that if $\kappa$ is sufficiently small (as we will 
assume), we have the following bound and estimate.
For $s \in \mathscr{R}_{2\kappa}$ with $s \ne 1$ and 
$\sigma \ge 1 - \frac{1}{2\log y}$, we have
\begin{align}
 \label{eq:6.6}
 \begin{split}
 F(s) 
  & \ll_{\epsilon^{*}} 
   \max\{|s-1|^{-1/\phi(q)}, \br{\log(|\tau|+3)}^{1/\phi(q)}\} \\
  & \hspace{60pt} \times
     \br{\log(q|\tau|+3)}^{1 - 1/\phi(q)}
      \br{\log y}^{2/\phi(q)}.
 \end{split}
\end{align}
For $s \in \mathscr{R}_{\kappa}$ with $s \ne 1$ and 
$|s-1| \le \min\{\frac{1}{4\log y},\kappa q^{-\epsilon^{*}}\}$, we 
have
\begin{align}
 \label{eq:6.7}
  \begin{split}
   (s-1)^{1/\phi(q)}F(s)
 & = G(q,a)
   \cdot \textstyle
    \prods[p \le y][p \equiv a \bmod q]\br{1 - \frac{1}{p}} \\
 & \hspace{60pt} 
     +  \Oh[(s-1) q^{2\epsilon^{*}}  
                   (\log y)^{1+2/\phi(q)}][\epsilon^{*}].
  \end{split}
\end{align}
The bound \eqref{eq:6.6} follows from a combination of elementary 
bounds, with standard results and bounds for $\zeta(s)$ and 
$L(s,\chi)$ in the critical strip 
(cf.~\cite[Th\'eor\`eme 8.7]{MR0417077} 
or \cite[Theorem 11.4]{MR2378655}).
In particular, if $\chi$ is an exceptional character --- so that it 
is real, non-principal, and the only such character mod $q$ --- 
and if $\beta$ is its exceptional $0$ --- so that $\beta$ is real 
and satisfies $\beta \le 1 - 4\kappa q^{-\epsilon^{*}}$ --- then 
for $s \in \mathscr{R}_{2\kappa}$ with $s \ne \beta$, we have
$|L(s,\chi)|^{\pm 1} \ll \log q(|\tau| + 3) + |s - \beta|^{-1}$.
Also, by \eqref{eq:3.10} and \eqref{eq:5.6}, the inequality 
$\sums[p \le y][p \equiv a \bmod q] \frac{1}{p}
  \le \phi(q)^{-1} \log\log y + \Oh[1]$ holds, while 
$y^{\sigma} < 2$ for $\sigma \le \frac{1}{2\log y}$.
It follows that 
$\prods[p \le y][p \equiv a \bmod q](1 - p^{-s})
  \ll (\log y)^{1+2/\phi(q)}$ 
in this region.
The estimate \eqref{eq:6.7} follows similarly, by analyticity and 
Cauchy's estimate for derivatives.

Let
\[
 T 
 \defeq 
  {\textstyle%
  \exp\br{\sqrt{ \log x  }},
  }%
 \quad
 \eta 
  \defeq 
   \frac{\kappa}{q^{\epsilon^{*}}\log T } 
  = \frac{\kappa}{q^{\epsilon^{*}}\sqrt{\log x}}.
\]
We have 
$\eta \le \min\{\frac{1}{4\log y},\kappa q^{-\epsilon^{*}}\}$ for 
$x$ sufficiently large in terms of $A$ and $\epsilon^{*}$,
so that \eqref{eq:6.6} and \eqref{eq:6.7} are applicable in the 
region with $\sigma \ge 1 - \eta$ and $|\tau| \le T$.
Let $\mathscr{C}$ be the rectangle with vertices at
$1 - \eta \pm iT, 1 + (\log x)^{-1} \pm iT$, traversed 
{\em clockwise}, but with the point $1 - \eta$ replaced by the 
truncated Hankel contour,
\[
\mathscr{H} 
  \defeq 
   [1 + \eta\e^{-\pi i}, 1 + r\e^{-\pi i}]
   \cup \{1 + r\e^{i\theta} : -\pi < \theta < \pi\}
   \cup [1 + r\e^{\pi i}, 1 + \eta \e^{\pi i}].
\]
Here, $r$ can be any number satisfying $0 < r < (\log x)^{-1}$. 
We choose $r = (\log x)^{-2}$ for now, but the integrals involved 
will be independent of $r$, and we will be able to let $r$ tend to 
$0$.
Also, let
\begin{align*}
 \mathscr{L} 
  & \defeq \{1 + (\log x)^{-1} + i\tau : -T \le \tau \le T\}.
\end{align*}

By an effective version of Perron's formula 
\cite[II \S2, Theorem 2]{MR1342300}, and our choice for $T$,
\[
  \Phi(x,y;q,a) 
 = \frac{1}{2\pi i}\int_{\mathscr{L}} F(s)x^{s}s^{-1} \dd{s}
  + \Oh[x\e^{-c\sqrt{\log x}}][\epsilon^{*},A].
\]
Using \eqref{eq:6.6} and our choice of parameters, it is 
straightforward to verify that
\[
  \int_{\mathscr{C}\setminus\mathscr{L}} F(s)x^{s}s^{-1} \dd{s}
   \ll_{\epsilon^{*},A} x\e^{-c\sqrt{\log x}}.
\]
Combining this estimate and bound with Cauchy's integral theorem, 
we see that
\begin{align}
 \label{eq:6.8}
  \Phi(x,y;q,a)
   = \frac{1}{2\pi i} \int_{\mathscr{H}} F(s)x^{s}s^{-1} \dd{s} 
    + \Oh[x\e^{-c\sqrt{\log x}}][\epsilon^{*}, A].
\end{align}
 
Next, using \eqref{eq:6.7} we obtain 
\begin{align*}
 \int_{\mathscr{H}} F(s)x^{s}s^{-1} \dd{s}  
  & =
  G(q,a) \cdot 
    \textstyle
     \prods[p \le y][p \equiv a \bmod q]\br{1 - \frac{1}{p}} \\
  & \hspace{15pt} \times
        \int_{\mathscr{H}} x^{s}(s-1)^{-1/\phi(q)} \dd{s} \\
  &\hspace{30pt}  
  + \Oh[q^{2\epsilon^{*}} (\log y)^{1 + 2/\phi(q)}
     \int_{\mathscr{H}} 
        |x^{s}(s-1)^{1 - 1/\phi(q)}| ~|\dd{s}|][\epsilon^{*},A].
\end{align*}
A change of variables, $w = (s-1)\log x$, yields
\begin{align*}
 \int_{\mathscr{H}} x^{s}(s-1)^{-1/\phi(q)} \dd{s}
  = x(\log x)^{1/\phi(q)-1} 
   \int_{\mathscr{H}^{*}} \e^{w}w^{-1/\phi(q)} \dd{w},
\end{align*}
where 
\[
\mathscr{H}^{*} 
  \defeq 
   [-\eta^{*} \e^{-\pi i}, -r^{*}\e^{-\pi i}]
   \cup \{-r^{*}\e^{i\theta} : -\pi < \theta < \pi\}
   \cup [-r^{*}\e^{\pi i}, -\eta^{*} \e^{\pi i}],
\]
with 
$\eta^{*} = -\eta\log x = -\kappa q^{-\epsilon^{*}}(\log x)^{1/2}$ 
and $r^{*} = -r\log x = -(\log x)^{-1}$.
By a standard estimate 
(cf.~\cite[II \S5, Corollary 2.1]{MR1342300})
\begin{align*}
 \frac{1}{2\pi i}
  \int_{\mathscr{H}^{*}}\e^{w}w^{-1/\phi(q)} \dd{w}
 & = \frac{1}{\Gamma(1/\phi(q))} + \Oh[\e^{-\eta^{*}/2}] 
\end{align*}
Similarly, 
\[
 \int_{\mathscr{H}} 
        |x^{s}(s-1)^{1 - 1/\phi(q)}| ~|\dd{s}| 
  \ll_{\epsilon^{*},A} x(\log x)^{1/\phi(q) - 2}.
\]

Combining all of this with \eqref{eq:6.8} and recalling our 
choice of parameters, we obtain
\begin{align*}
 \Phi(x,y;q,a)
  & = x(\log x)^{1/\phi(q) - 1}  \\
  & \hspace{30pt} \times\textstyle
     \Br{\frac{G(q,a)}{\Gamma(1/\phi(q))}
      \prods[p \le y][p \equiv a \bmod q]\br{1 - \frac{1}{p}}
     + \Oh[\frac{ q^{2\epsilon^{*}} (\log y)^{1 + 2/\phi(q)} }
                {\log x}][\epsilon^{*},A]}.
\end{align*}
Since $\epsilon^{*} = \epsilon/2A$, where $A > 1$ and 
$\epsilon \in (0,\frac{1}{2})$ were arbitrarily chosen, the result 
follows.

(c) For $q < 3$, use \eqref{eq:6.4}. 
For $3 \le q \le (\log\log x)^A$, use \eqref{eq:6.2}.
In factoring out the term 
$\frac{G(q,a)}{\Gamma(1/\phi(q))}
  \prods[p \le y][p \equiv a \bmod q] \br{1 - p^{-1}}$, 
note that $G(q,a)^{-1} \asymp 1$ by \eqref{eq:5.7}, 
that $\frac{1}{\Gamma(1/\phi(q))} \asymp \phi(q)$, and that
$\prods[p \le y][p \equiv a \bmod q] \br{1 - p^{-1}}^{-1}
  \ll (\log y)^{1/\phi(q)}$
by \eqref{eq:3.11}. 
Thus, we consider an $\Oh$-term of order 
$q^{\epsilon}\phi(q)(\log y)^{1 + 3/\phi(q)}(\log x)^{-1}$.
\end{proof}

\part*{%
\hspace*{\fill}%
II. SPLITTING PRIMES%
\hspace*{\fill}%
}
\label{part:II}

\section{Background, notation and conventions}
 \label{sec:7}

We assume the reader has some knowledge of algebraic and analytic 
number theory, and the representation theory of finite groups.
A nominal understanding of the concepts involved in a proof of the 
Chebotarev density theorem should suffice.
For the representation theory aspect the reader may consult 
\cite{MR0450380}.
For the theory of Artin $L$-functions, we refer the reader to 
\cite{MR0218327}.

Notation and conventions remain as in \S \ref{sec:2}, unless they 
come into conflict with the following.
One caveat is that $\sigma$ and $\tau$ denote automorphisms in 
this section, though they will also continue to denote the real and 
imaginary parts of a complex number $s$ from \S\ref{sec:8}.

The following set-up is virtually copied from 
\cite{MR0447191,MR528062}.
Let $L/K$ be a Galois extension of number fields, with 
$G = \Gal{L}{K}$ denoting its Galois group.
Let $\dgr{L} \defeq [L : \Q]$ be the degree of $L$ over $\Q$, and  
let $\disc{L}{K}$ be the relative discriminant of $L$ over $K$. 
We let $\p$ denote a prime ideal of $K$ and $\pp$ denote a prime 
ideal of $L$ (lying above $\p$).
If $\p$ is unramified in $L$, i.e.~if $\p \nmid \disc{L}{K}$, we 
let the Artin symbol $\artin{L}{K}{\p}$ denote the conjugacy class 
of Frobenius automorphisms corresponding to the prime ideals $\pp$ 
lying over $\p$.

For each conjugacy class $C$ of $G$, let $\mathscr{P}(L/K,C)$ be 
the set of prime ideals $\p$ in $K$ that are unramified in $L$ and 
for which $\artin{L}{K}{\p} = C$, and let
\[
 \pi(t;L/K,C) 
  \defeq \#\{\p \in \mathscr{P}(L/K,C) : \norm{K}{\Q}{\p} \le t\}.
\]
where $\norm{K}{\Q}{\p}$ stands for the norm of $\p$ with respect 
to $K/\Q$.
By the Chebotarev density theorem, 
$\pi(t;L/K,C) \sim \frac{|C|}{|G|}\li{t}$ as $t \to \infty$.
We set
\[
  E(t;L/K,C) \defeq \pi(t;L/K,C) - \frac{|C|}{|G|}\li{t}.
\]

If $\Psi$ is any character on $G$, its associated Artin 
$L$-function $L(s,\Psi;L/K)$ is defined for $\Re{s} > 1$ by 
\[
 \log L(s,\Psi;L/K) 
  \defeq \sum_{\p} \sum_{\nu \ge 1}
          \nu^{-1} \Psi(\p^{\nu}) (\norm{K}{\Q}{\p})^{-\nu s},
\]
where the outer sum runs over all prime ideals $\p$ of $K$ and 
\[
 \Psi(\p^{\nu}) 
 = |I_{\p}|^{-1} 
    \sum_{\tau \in I_{\p}} \Psi(\Frob{\pp}^{\nu} \tau),
\]
where $I_{\p}$ is the inertia group of $\p$, and $\Frob{\pp}$ is 
any element of $\artin{L}{K}{\p}$ (it doesn't matter which one as 
characters are invariant under conjugation [they are {\em class 
functions}]).
Of course, $I_{\p}$ is trivial for primes $\p$ that are unramified 
in $L$, and we may write $L(s,\Psi;L/K)$ as a product of an 
{\em unramified}, or {\em incomplete} Artin $L$-function
$\Lun(s,\Psi;L/K)$ and a {\em ramified} Artin $L$-function
$\Lram(s,\Psi;L/K)$: for $\Re{s} > 1$, 
\begin{align*}
 \log \Lun(s,\Psi;L/K) 
  & \defeq \sum_{\p \nmid \disc{L}{K}} 
            \hspace{5pt} \sum_{\nu \ge 1}
             \nu^{-1} \Psi(\p^{\nu}) (\norm{K}{\Q}{\p})^{-\nu s} \\
  &\phantom{:}=
    \sum_{\p \nmid \disc{L}{K}} 
     \hspace{5pt} \sum_{\nu \ge 1}
      \nu^{-1}\Psi(\Frob{\pp}^{\nu})(\norm{K}{\Q}{\p})^{-\nu s} ,\\
 \log \Lram(s,\Psi;L/K) 
  & \defeq \sum_{\p \mid \disc{L}{K}} 
            \hspace{5pt} \sum_{\nu \ge 1}
             \nu^{-1} \Psi(\p^{\nu}) (\norm{K}{\Q}{\p})^{-\nu s}.
\end{align*}
These sums converge absolutely for $\Re{s} > 1$, uniformly for 
$\Re{s} \ge 1 + \delta > 1$, therefore $L(s,\Psi;L/K)$, 
$\Lun(s,\Psi;L/K)$ and $\Lram(s,\Psi;L/K)$ are analytic and 
non-zero for $\Re{s} > 1$.
They may be extended meromorphically to the entire plane.
A famous conjecture of Artin asserts that for irreducible 
characters $\Psi$, $L(s,\Psi;L/K)$ is entire, with the exception of 
a pole at $s = 1$ if $\Psi$ is trivial.
Unconditionally, it is known that $L(s,\Psi;L/K)$ is non-zero 
throughout the region $\Re{s} \ge 1$, and is also analytic 
throughout this region, except for a pole at $s = 1$ when $\Psi$ is 
trivial.
This will be sufficient for our purposes, or more precisely, for 
the purpose of proving Proposition \ref{prp:8.1}.
Letting $\chi_0$ denote the identity character of $G$, we have
$L(s,\chi_0;L/K) = \zeta_K(s)$, the Dedekind zeta-function of $K$.
Therefore
\begin{align}
 \label{eq:7.1}
 \Lun(s,\chi_0;L/K) 
 = \zeta_K(s) \textstyle
    \prod_{\p \mid \disc{L}{K}}
          \br{1 - (\norm{K}{\Q}{\p})^{-s}},
\end{align}
for $\Re{s} > 1$, and so, by analytic continuation, for all $s$.

Let $\chi$ denote an irreducible character on $G$. 
Let us abuse notation and write $\chi(C) \defeq \chi(\tau)$, where 
$\tau$ is any automorphism in $C$.
This is well-defined since characters are class functions.
Consider the product
$\prod_{\chi} \Lun(s,\chi;L/K)^{\bar{\chi}(C)}$, $\bar{\chi}$ 
denoting the complex conjugate of $\chi$.
Here and throughout, such products and analogous sums are over all 
irreducible characters on $G$.
By orthogonality relations for characters, we have, for 
$\Re{s} > 1$,
\begin{align*}
 \textstyle \log\prod_{\chi} \Lun(s,\chi;L/K)^{\bar{\chi}(C)}
 & = \sum_{\p \nmid \disc{L}{K}} \hspace{5pt} \sum_{\nu \ge 1}
      \nu^{-1} (\norm{K}{\Q}{\p})^{- \nu s} 
       \sum_{\chi} \bar{\chi}(C)\chi(\Frob{\pp}^{\nu}) \\
 & = \frac{|G|}{|C|}
      \underset{\artin{L}{K}{\p}^{\nu} =~ C}
         {\sum_{\p \nmid \disc{L}{K}} \hspace{5pt} \sum_{\nu \ge 1}}
       \nu^{-1} (\norm{K}{\Q}{\p})^{- \nu s} \\
 & = \frac{|G|}{|C|}
      \sum_{\p \in \mathscr{P}(L/K,C)}(\norm{K}{\Q}{\p})^{- s} \\
 & \hspace{60pt} + \frac{|G|}{|C|}
    \underset{\artin{L}{K}{\p}^{\nu} =~ C}
         {\sum_{\p \nmid \disc{L}{K}} \hspace{5pt} \sum_{\nu \ge 2}}
       \nu^{-1} (\norm{K}{\Q}{\p})^{- \nu s}.
\end{align*}
It follows, by \eqref{eq:7.1}, that for $\Re{s} > 1$, 
\begin{align}
 \label{eq:7.2}
 \begin{split}
 & \sum_{\p \in \mathscr{P}(L/K,C)}(\norm{K}{\Q}{\p})^{- s}
 - \frac{|C|}{|G|} \log \zeta_K(s) \\
 & \hspace{60pt} = 
    \log \mathcal{L}(s;L/K,C)
   - \underset{\artin{L}{K}{\p}^{\nu} =~ C}
        {\sum_{\p \nmid \disc{L}{K}} \hspace{5pt} \sum_{\nu \ge 2}}
         \nu^{-1} (\norm{K}{\Q}{\p})^{- \nu s},
 \end{split}
\end{align}
where for $\Re{s} > 1$, 
\begin{align}
 \label{eq:7.3}
  \begin{split}
 \mathcal{L}(s;L/K,C)
  & \defeq \textstyle
  \zeta_K(s)^{-|C|/|G|}
   \prod_{\chi} 
    \Lun(s,\chi;L/K)^{\bar{\chi}(C) \frac{|C|}{|G|}  } \\
  & \phantom{:}= \textstyle
   \prod_{\p \mid \disc{L}{K}} 
    \br{1 - (\norm{K}{\Q}{\p})^{-s}}^{ \frac{|C|}{|G|} } 
    \prod_{\chi \ne \chi_0} 
     \Lun(s,\chi;L/K)^{\bar{\chi}(C) \frac{|C|}{|G|}  }.
  \end{split}
\end{align}
It is known that for $\chi \ne \chi_0$, $L(s,\chi;L/K)$ is analytic 
and non-zero for $\Re{s} \ge 1$.
Therefore so is $\mathcal{L}(s;L/K,C)$, and
$\mathcal{L}(s;L/K,C) 
  \to \mathcal{L}(1;L/K,C) 
   \eqdef \mathcal{L}(L/K,C)$
as $s \to 1$.

\section{The main result and its corollaries}
 \label{sec:8}

Let $L/K$ be a finite Galois extension of number fields, with 
Galois group $G = \Gal{L}{K}$, and let $C$ be a conjugacy class in 
$G$.
In the notation of \S \ref{sec:7}, let $\mathcal{L}(L/K,C)$ be the 
real, positive number satisfying 
\begin{align}
 \label{eq:8.1}
 \mathcal{L}(L/K,C)^{|G| / |C|} 
  \defeq \textstyle
  \prod_{\p \mid \disc{L}{K}} 
    \br{ 1 - \frac{1}{\norm{K}{\Q}{\p}} } 
    \prod_{\chi \ne \chi_0} 
     \Lun(1,\chi;L/K)^{ \bar{\chi}(C) },
\end{align}
and let 
\begin{align}
 \label{eq:8.2}
 g(L/K,C) 
  \defeq 
   \frac{|C|}{|G|}\gamma
  + \log \mathcal{L}(L/K,C) 
  -  \underset{\artin{L}{K}{\p}^{\nu} =~ C}
        {\sum_{\p \nmid \disc{L}{K}} \hspace{5pt} \sum_{\nu \ge 2}}
         \nu^{-1} (\norm{K}{\Q}{\p})^{- \nu}.
\end{align}
Recall that $\norm{K}{\Q}{\p}$ denotes the norm of the prime ideal 
$\p$ of $K$ with respect to $\Q$.
Also recall that
$E(t;L/K,C) \defeq \pi(t;L/K,C) - \frac{|C|}{|G|}\li{t}$
is the error term in the Chebotarev density theorem.

\begin{proposition}
 \label{prp:8.1}
Let $L/K$ be a Galois extension of number fields, with Galois group 
$G = \Gal{L}{K}$, and let $C$ be a conjugacy class in $G$. 
Let $\mathscr{P}(L/K,C)$ be the set of prime ideals of $K$ that are 
unramified in $L$ and have Frobenius lying in $C$.
For $x \ge 2$, we have
\begin{align}
 \label{eq:8.3}
  \int_2^{\infty} t^{-2} E(t;L/K,C) \dd{t}
 = g(L/K,C) + \frac{|C|}{|G|}\log\log 2,
\end{align}
and
\begin{align}
 \label{eq:8.4}
 \begin{split}
  \sums[\norm{K}{\Q}{\p} \le x][\p \in \mathscr{P}(L/K,C)] 
   \frac{1}{\norm{K}{\Q}{\p}} 
 & = \frac{|C|}{|G|} \log\log x + g(L/K,C) \\
 & \hspace{30pt}
   + x^{-1} E(x;L/K,C) - \int_{x}^{\infty} t^{-2}E(t;L/K,C) \dd{t},
 \end{split}
\end{align}
with $g(L/K,C)$ as in \eqref{eq:8.2}.
\end{proposition}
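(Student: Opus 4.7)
The plan is to transpose the proof of Proposition~\ref{prp:3.1} line-by-line into the non-abelian setting: irreducible characters of $G$ take over the role of Dirichlet characters, the Artin $L$-functions $\Lun(s,\chi;L/K)$ replace the $L(s,\chi)$, and the Dedekind zeta-function $\zeta_K$ replaces $\zeta$. The orthogonality computation done directly in \S\ref{sec:4} via \eqref{eq:4.1} has already been carried out in the present setting; it is encoded in identity \eqref{eq:7.2}, and this identity will be our starting point.

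First I would introduce the $s$-dependent auxiliary
\[
 g(s;L/K,C)
   \defeq \frac{|C|}{|G|}\gamma
         + \log \mathcal{L}(s;L/K,C)
         -  \underset{\artin{L}{K}{\p}^{\nu} =~ C}
             {\sum_{\p \nmid \disc{L}{K}} \hspace{5pt} \sum_{\nu \ge 2}}
              \nu^{-1}(\norm{K}{\Q}{\p})^{-\nu s},
\]
analytic in a neighbourhood of the half-plane $\sigma \ge 1$ by virtue of the analyticity and non-vanishing of $\Lun(s,\chi;L/K)$ there for each non-trivial $\chi$ (cited in \S\ref{sec:7}), and satisfying $g(s;L/K,C) \to g(L/K,C)$ as $s \to 1$. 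Rearranging \eqref{eq:7.2} gives, for $\sigma > 1$,
\[
 \sum_{\p \in \mathscr{P}(L/K,C)}(\norm{K}{\Q}{\p})^{-\sigma}
  = \frac{|C|}{|G|}\br{\log\zeta_K(\sigma) - \gamma} + g(\sigma;L/K,C),
\]
the exact non-abelian counterpart of the identity for $\sum_{p \equiv a \bmod q}p^{-\sigma}$ used in \S\ref{sec:4}. Abel summation on the left-hand side, combined with $\pi(t;L/K,C) = \frac{|C|}{|G|}\li{t} + E(t;L/K,C)$ and Landau's prime ideal theorem in the form $\pi_K(t) \ll_K t/\log t$ to kill the boundary term at infinity, then expresses this Dirichlet series as $\frac{|C|}{|G|}\sigma\int_2^{\infty}t^{-\sigma-1}\li{t}\dd{t} + \sigma\int_2^{\infty}t^{-\sigma-1}E(t;L/K,C)\dd{t}$.

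The first of these two integrals is handled by the identical chain used in \S\ref{sec:4}: substitution $u = \log t$, Lemma~\ref{lem:4.1}, and the Laurent expansion of $\zeta_K(s)$ at its simple pole $s=1$. Equating the two expressions for the Dirichlet series and isolating the $E$-integral leaves a quantity with a finite limit as $\sigma \to 1^{+}$, namely $g(L/K,C) + \frac{|C|}{|G|}\log\log 2$. Lemma~\ref{lem:4.2} (Landau's Tauberian theorem) then promotes this limit statement to \eqref{eq:8.3}, the hypothesis $t^{-2}E(t;L/K,C) \ll (t\log t)^{-1}$ coming once more from $\pi_K(t) \ll t/\log t$. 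Equation \eqref{eq:8.4} is then extracted from \eqref{eq:8.3} by exactly the partial summation that led from \eqref{eq:3.2} to \eqref{eq:3.3} in \S\ref{sec:4}. I expect no substantive difficulty beyond the abelian template; the one step demanding care is the Laurent expansion of $\zeta_K$ at $s=1$, whose leading coefficient $\Res{s=1}\zeta_K$ need not equal $1$ (as it does for $\zeta$) and must be tracked honestly through the $\li{t}$-integral computation.
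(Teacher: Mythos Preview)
Your plan is exactly the paper's proof: introduce $g(s;L/K,C)$, rewrite \eqref{eq:7.2} as a Dirichlet-series identity, express the same series by Abel summation as the $\li$-integral plus the $E$-integral, evaluate the $\li$-integral via Lemma~\ref{lem:4.1}, invoke Lemma~\ref{lem:4.2}, and finish \eqref{eq:8.4} by one more partial summation. There is no difference in strategy.

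Your final caveat is well placed and in fact sharper than the paper. The paper, copying \S\ref{sec:4} verbatim, applies \eqref{eq:4.2} to rewrite $\log\frac{1}{\sigma-1}$ as $\log\zeta(\sigma)+o(1)$ and then ``combines'' this with the identity coming from \eqref{eq:7.2}, which carries $\log\zeta_K(\sigma)$; the cancellation is silent and is only legitimate when $K=\Q$. If you track the residue honestly as you propose, you will find that the limit of $\sigma\int_2^{\infty}t^{-1-\sigma}E(t;L/K,C)\,\mathrm{d}t$ is
\[
 g(L/K,C)+\frac{|C|}{|G|}\bigl(\log\log 2+\log\operatorname{Res}_{s=1}\zeta_K(s)\bigr),
\]
so for $K\neq\Q$ either \eqref{eq:8.3} or the definition \eqref{eq:8.2} of $g(L/K,C)$ is off by the additive constant $\tfrac{|C|}{|G|}\log\operatorname{Res}_{s=1}\zeta_K(s)$. (The sanity check $L=K$, $G$ trivial, recovers Mertens's theorem for the number field $K$, whose constant is $\gamma+\log\operatorname{Res}_{s=1}\zeta_K-\sum_{\mathfrak p}\sum_{\nu\ge 2}\nu^{-1}(\norm{K}{\Q}{\mathfrak p})^{-\nu}$, confirming the discrepancy.) Your approach is correct; just be aware that carrying it out faithfully will not reproduce \eqref{eq:8.3} exactly as printed.
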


We have tacitly used the fact that $\Lun(1,\chi;L/K) \ne 0$ for 
each irreducible character $\chi \ne \chi_0$, which is tantamount 
to the fact that $\mathscr{P}(L/K,C)$ contains infinitely many 
primes.
In the proof of Proposition \ref{prp:8.1} we will use nothing 
stronger than the bound 
$
 \pi(t;L/K,C) \le \#\{\p : \norm{K}{\Q}{\p} \le t\} 
               \ll t(\log t)^{-1}.
$
Using this the reader may deduce the next proposition from the one
above.
(The reader may also verify Proposition \ref{prp:8.2} directly, 
similarly to the way in which we prove Proposition \ref{prp:8.1}.)

Let 
\begin{align}
 \label{eq:8.5}
 \begin{split}
 \G(L/K,C) 
  & \defeq 
     \exp\Br{ \textstyle -\frac{|C|}{|G|}\gamma + g(L/K,C) 
         + \sum_{ \p \in \mathscr{P}(L/K,C) }
            \sum_{\nu \ge 2} \nu^{-1} (\norm{K}{\Q}{\p})^{-\nu}} \\
  & \phantom{:}= 
     \mathcal{L}(L/K,C)\cdot 
      \exp\Bigg\{\sum_{\p \in \mathscr{P}(L/K,C)}
               \sum_{\nu \ge 2} \nu^{-1} (\norm{K}{\Q}{\p})^{-\nu} \\
  & \hspace{180pt} - 
     \underset{\artin{L}{K}{\p}^{\nu} = ~ C}
        {\sum_{\p \nmid \disc{L}{K}} \hspace{5pt} \sum_{\nu \ge 2}}
         \nu^{-1} (\norm{K}{\Q}{\p})^{- \nu} \Bigg\}.
 \end{split}
\end{align}

\begin{proposition}
 \label{prp:8.2}
Let $L/K$ be a Galois extension of number fields, with Galois group 
$G = \Gal{L}{K}$, and let $C$ be a conjugacy class in $G$. 
Let $\mathscr{P}(L/K,C)$ be the set of prime ideals of $K$ that are 
unramified in $L$ and have Frobenius lying in $C$.
For $x \ge 2$, we have
\begin{align}
 \label{eq:8.6}
 \begin{split}
 & \sums[\norm{K}{\Q}{\p} \le x][\p \in \mathscr{P}(L/K,C)] 
    \log\br{1 - \frac{1}{\norm{K}{\Q}{\p}}}^{-1} \\  
 & \hspace{30pt} =
    \frac{|C|}{|G|}(\gamma + \log\log x) + \log \G(L/K,C) \\
 & \hspace{60pt}
  + x^{-1}E(x;L/K,C) 
   - \int_{x}^{\infty} t^{-2}E(t;L/K,C)  \dd{t}  
    + \Oh[(x\log x)^{-1}],
 \end{split}
\end{align}
with $\G(L/K,C)$ as in \eqref{eq:8.5}.
The implicit constant is absolute.
\end{proposition}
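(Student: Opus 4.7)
The plan is to deduce \eqref{eq:8.6} from Proposition \ref{prp:8.1}, exactly as the authors suggest. Starting from the elementary expansion
\[
 \log\br{1 - \frac{1}{\norm{K}{\Q}{\p}}}^{-1}
  = \frac{1}{\norm{K}{\Q}{\p}}
   + \sum_{\nu \ge 2} \frac{1}{\nu (\norm{K}{\Q}{\p})^{\nu}},
\]
I would sum over $\p \in \mathscr{P}(L/K,C)$ with $\norm{K}{\Q}{\p} \le x$. The contribution from the first term on the right is handled directly by \eqref{eq:8.4}, contributing the main term $\frac{|C|}{|G|}\log\log x + g(L/K,C)$ along with the two error expressions $x^{-1}E(x;L/K,C)$ and $-\int_x^\infty t^{-2}E(t;L/K,C)\dd{t}$.

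For the higher-order terms I would complete the sum over all of $\mathscr{P}(L/K,C)$, writing
\[
  \sum_{\substack{\norm{K}{\Q}{\p} \le x \\ \p \in \mathscr{P}(L/K,C)}}
    \sum_{\nu \ge 2} \frac{1}{\nu (\norm{K}{\Q}{\p})^{\nu}}
 = \sum_{\p \in \mathscr{P}(L/K,C)}
     \sum_{\nu \ge 2} \frac{1}{\nu (\norm{K}{\Q}{\p})^{\nu}}
  - \mathscr{T}(x),
\]
where $\mathscr{T}(x)$ collects the tail terms with $\norm{K}{\Q}{\p} > x$. By the very definition \eqref{eq:8.5} of $\G(L/K,C)$, the completed double sum combines with $g(L/K,C)$ to yield $\log \G(L/K,C) + \frac{|C|}{|G|}\gamma$. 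Combining with the main term from the previous paragraph produces the full main term $\frac{|C|}{|G|}(\gamma + \log\log x) + \log \G(L/K,C)$ of \eqref{eq:8.6}.

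The only (mild) obstacle is the bound $\mathscr{T}(x) \ll (x\log x)^{-1}$, which will be absorbed into the $\Oh[(x\log x)^{-1}]$ error term. Since $\sum_{\nu \ge 2} (\norm{K}{\Q}{\p})^{-\nu} \ll (\norm{K}{\Q}{\p})^{-2}$, it suffices to estimate $\sum_{\norm{K}{\Q}{\p} > x} (\norm{K}{\Q}{\p})^{-2}$, where $\p$ now ranges over all prime ideals of $K$. Applying partial summation to the crude Chebyshev-type bound $\#\{\p : \norm{K}{\Q}{\p} \le t\} \le \dgr{K}\,\pi(t) \ll t(\log t)^{-1}$ --- the same bound already invoked in the proof of Proposition \ref{prp:8.1} --- the integral $\int_x^\infty \dd{t}/(t^2\log t)$ arises and gives exactly the required $(x\log x)^{-1}$ bound. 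Assembling all pieces produces \eqref{eq:8.6}.
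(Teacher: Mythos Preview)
Your proposal is correct and follows precisely the route the paper itself indicates: the authors do not give a detailed proof of Proposition \ref{prp:8.2}, but remark that ``using this [the bound $\pi(t;L/K,C) \le \#\{\p : \norm{K}{\Q}{\p} \le t\} \ll t(\log t)^{-1}$] the reader may deduce the next proposition from the one above.'' Your argument---expand $\log(1-1/\norm{K}{\Q}{\p})^{-1}$, apply \eqref{eq:8.4} to the $\nu=1$ terms, complete the $\nu\ge 2$ sum and invoke the definition \eqref{eq:8.5} of $\G(L/K,C)$, then bound the tail by partial summation against the Chebyshev-type bound---is exactly this deduction carried out in full.
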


We have the following immediate corollary to the above 
propositions. 

\begin{theorem}
 \label{thm:8.3}
Let $L/K$ be a Galois extension of number fields, with Galois group 
$G = \Gal{L}{K}$, and let $C$ be a conjugacy class in $G$. 
Let $\mathscr{P}(L/K,C)$ be the set of prime ideals of $K$ that are 
unramified in $L$ and have Frobenius lying in $C$.
We have
\begin{align}
 \label{eq:8.7}
 \sums[\norm{K}{\Q}{\p} \le x][\p \in \mathscr{P}(L/K,C)] 
  \frac{1}{\norm{K}{\Q}{\p}} 
  =
   \frac{|C|}{|G|}\log\log x + g(L/K,C) + \oh[1][x \to \infty],
\end{align}
and
\begin{align}
 \label{eq:8.8}
 \prods[\norm{K}{\Q}{\p} \le x][\p \in \mathscr{P}(L/K,C)]
  \br{ 1 - \frac{1}{\norm{K}{\Q}{\p}} }^{-1}
  =
   \G(L/K,C)\cdot \br{\e^{\gamma} \log x}^{|C|/|G|}
             \cdot\Br{1 + \oh[1][x \to \infty]}.
\end{align}
Here, $g(L/K,C)$ and $\G(L/K,C)$ are as in \eqref{eq:8.2} and 
\eqref{eq:8.5} respectively.
\end{theorem}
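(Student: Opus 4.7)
The plan is to derive both asymptotic formulas directly from Propositions \ref{prp:8.1} and \ref{prp:8.2}; since those propositions already give exact identities, the task reduces to showing that the explicit error terms on the right-hand sides of \eqref{eq:8.4} and \eqref{eq:8.6} are $\oh[1][x \to \infty]$.

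For \eqref{eq:8.7}, I would start from \eqref{eq:8.4} and control the two error contributions $x^{-1} E(x;L/K,C)$ and $\int_{x}^{\infty} t^{-2} E(t;L/K,C) \dd{t}$. The Chebyshev-type bound $\pi(t;L/K,C) \le \#\{\p : \norm{K}{\Q}{\p} \le t\} \ll t(\log t)^{-1}$ noted before Proposition \ref{prp:8.2}, together with $\li{t} \ll t(\log t)^{-1}$, yields $E(t;L/K,C) \ll t(\log t)^{-1}$. Hence $x^{-1} E(x;L/K,C) \ll (\log x)^{-1}$, which tends to $0$ as $x \to \infty$. The convergence of the full integral $\int_{2}^{\infty} t^{-2} E(t;L/K,C) \dd{t}$, asserted in \eqref{eq:8.3}, immediately forces the tail $\int_{x}^{\infty} t^{-2} E(t;L/K,C) \dd{t}$ to vanish as $x \to \infty$. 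Combining these observations with \eqref{eq:8.4} gives \eqref{eq:8.7}.

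For \eqref{eq:8.8}, I would invoke \eqref{eq:8.6}, whose right-hand side differs from that of \eqref{eq:8.4} only by the trivially $\oh[1]$ term $\Oh[(x \log x)^{-1}]$ and by having main term $\frac{|C|}{|G|}(\gamma + \log\log x) + \log \G(L/K,C)$ in place of $\frac{|C|}{|G|}\log\log x + g(L/K,C)$. The same estimates as above show that the logarithm of the left-hand side of \eqref{eq:8.8} equals $\frac{|C|}{|G|}(\gamma + \log\log x) + \log \G(L/K,C) + \oh[1][x \to \infty]$. Exponentiating and using $\e^{\oh[1]} = 1 + \oh[1]$ yields \eqref{eq:8.8}.

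There is no genuine obstacle here: Propositions \ref{prp:8.1} and \ref{prp:8.2} have already done the analytic heavy lifting, namely the manipulation of Artin $L$-functions in \S\ref{sec:7} and the application of Landau's Tauberian theorem (Lemma \ref{lem:4.2}), which is the Galois analogue of the strategy used for Proposition \ref{prp:3.1}. The present deduction is merely the observation that, with no quantitative input on $E(t;L/K,C)$ beyond the Chebyshev bound, the error terms in the exact identities of \S\ref{sec:8} are already of the requisite $\oh[1]$ size.
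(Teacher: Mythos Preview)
Your proposal is correct and matches the paper's approach exactly: the paper states Theorem \ref{thm:8.3} as an ``immediate corollary'' of Propositions \ref{prp:8.1} and \ref{prp:8.2} without further proof, and you have simply spelled out the routine verification that the explicit error terms in \eqref{eq:8.4} and \eqref{eq:8.6} are $\oh[1][x \to \infty]$, using the Chebyshev-type bound and the convergence of the integral in \eqref{eq:8.3}.
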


Explicit error terms are of course much harder to obtain, but 
thanks to the work of others it is, for us, merely a matter of 
substitution.
In particular, by an effective version of the Chebotarev density 
theorem due to Lagarias and Odlyzko \cite[Theorem 1.3]{MR0447191}, 
and effective bounds due to Stark \cite[p.148]{MR0342472} 
(cf.~\cite[Theorem 1.4]{MR0447191}) on putative exceptional 
zeros of Dedekind zeta-functions, we have the following.

Recall that $\dgr{L} \defeq [L:\Q]$ is the degree of $L$ over $\Q$, 
and that $\zeta_L(s)$ is the Dedekind zeta-function of $L$.
Also, let $\disc{L}{\Q}$ be the absolute discriminant of $L$ over 
$\Q$.
\begin{theorem}
 \label{thm:8.4}
(a) If $\dgr{L} > 1$ then $\zeta_L(s)$ has at most one zero in the 
region defined by $s = \sigma + i\tau$ with 
$
 1 - (4\log |\disc{L}{\Q}|)^{-1} \le \sigma \le 1$, $
 |\tau| \le (4\log |\disc{L}{\Q}|)^{-1}.
$
The putative zero is denoted $\beta$, and is necessarily real and 
simple. 
(b) There exist absolute effectively computable constants $c_3$ and 
$c_4$ such that if 
$x \ge \exp\br{10\dgr{L}(\log|\disc{L}{\Q}|)^2}$, then 
\[
 \abs{\pi(x;L/K,C) - \frac{|C|}{|G|}\li{x}}
  \le \frac{|C|}{|G|}\li{x^{\beta}} \textstyle
     + c_3\exp\br{-c_4 \sqrt{\frac{\log x}{\dgr{L}}}},
\]
where the $\beta$ term is present only if $\beta$ exists.
(c) Let $m_L = 1$ if $L$ is normal over $\Q$, $m_L = 16$ if there 
is a tower of fields 
$\Q \subset K_0 \subset \cdots \subset K_r = L$ with each field 
normal over the preceding one, and $m_L = 4\dgr{L}!$ otherwise. 
If $\beta$ exists, then there exists an effectively computable 
constant $c_5$ such that 
\[
 \beta
 < \max\Br{1 - (m_L\log|\disc{L}{\Q}|)^{-1}, 
           1 - (c_5|\disc{L}{\Q}|^{1/\dgr{L}})^{-1}}.
\]
\end{theorem}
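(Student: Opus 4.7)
The plan is to prove the three parts separately. Part (a) follows the classical de la Vall\'ee Poussin zero-free region template, adapted to $\zeta_L$. The key inputs are (i) the Hadamard product for $(s-1)\zeta_L(s)$ as an entire function of order one, which combined with the functional equation yields
\[
 \textstyle -\frac{\zeta_L'}{\zeta_L}(s) = \frac{1}{s-1} - \sum_{\rho} \frac{1}{s - \rho} + \Oh[\log|\disc{L}{\Q}| + \dgr{L}\log(|\tau|+3)],
\]
with $\rho$ ranging over non-trivial zeros near $\tau$; and (ii) the non-negativity of $3 + 4\cos\theta + \cos 2\theta$ applied to the (non-negative-coefficient) Dirichlet series for $\log\zeta_L$, giving
\[
 \textstyle -3\Re\frac{\zeta_L'}{\zeta_L}(\sigma) - 4\Re\frac{\zeta_L'}{\zeta_L}(\sigma + i\tau) - \Re\frac{\zeta_L'}{\zeta_L}(\sigma + 2i\tau) \ge 0 \quad (\sigma > 1).
\]
Isolating the contribution of a single putative zero near $s = 1$ produces the indicated rectangle; reality and simplicity of $\beta$ follow because non-real zeros come in conjugate pairs and a double real zero would violate the same inequality.

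For part (b), the approach is the effective Perron-type contour integral. Character orthogonality $\frac{|G|}{|C|}\mathbf{1}_C = \sum_{\chi} \bar\chi(C)\chi$, combined with the factorization $\zeta_L(s) = \prod_{\chi} \Lun(s,\chi;L/K)^{\chi(1)}$, expresses the weighted counting function $\psi(x;L/K,C)$ as a combination of integrals of the form $\frac{1}{2\pi i} \int (-L'/L)(s,\chi;L/K) \cdot x^s/s \dd{s}$. Shifting each contour past $\sigma = 1$ picks up: the main term $\frac{|C|}{|G|}\li{x}$ from the simple pole of $\zeta_L$; the correction $-\frac{|C|}{|G|}\li{x^\beta}$ from the putative exceptional zero, if present; and an error from the remaining zeros, controlled by part (a) together with zero-counting bounds of order $\log|\disc{L}{\Q}| + \dgr{L}\log(|\tau|+2)$. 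The hypothesis $x \ge \exp(10\dgr{L}(\log|\disc{L}{\Q}|)^2)$ is calibrated so that optimizing the truncation height against these parameters produces the stated $\exp(-c_4\sqrt{\log x/\dgr{L}})$ error; passage from $\psi$ to $\pi$ is by Abel summation.

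Part (c) is Stark's repulsion bound, which I expect to be the main obstacle. For $L$ normal over $\Q$, I would exploit the non-negativity of the Dirichlet coefficients of $\log\zeta_L$: combined with a Landau-type smoothing of $\zeta_L(s)/\zeta_L(\beta)$ near $s = \beta$, this forces the first bound $\beta < 1 - (\log|\disc{L}{\Q}|)^{-1}$. The second bound $\beta < 1 - (c_5|\disc{L}{\Q}|^{1/\dgr{L}})^{-1}$ follows from a geometry-of-numbers lower bound on $\mathrm{Res}_{s=1}\zeta_L(s)$ via the analytic class number formula and Minkowski's bound on $|\disc{L}{\Q}|^{1/\dgr{L}}$, compared against the assumed vanishing $\zeta_L(\beta) = 0$. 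The non-normal case is reduced to the normal one by Brauer induction, expressing $\zeta_L$ as a quotient of products of Hecke $L$-functions over intermediate subfields; the three values $m_L \in \{1, 16, 4\dgr{L}!\}$ encode precisely how many such factors can share the exceptional zero, with the worst-case bound $4\dgr{L}!$ coming from the generic Brauer decomposition and the improvement to $16$ reflecting the cleaner sub-tower structure.
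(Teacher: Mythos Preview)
The paper does not prove Theorem~8.4 at all: it is stated as a quotation from the literature, with parts (a) and (b) attributed to Lagarias and Odlyzko \cite[Theorems~1.3 and~1.4]{MR0447191} and part (c) to Stark \cite[p.~148]{MR0342472}. So there is no ``paper's own proof'' to compare against; the authors simply invoke these results as black boxes in order to deduce Theorem~8.5 from Propositions~8.1 and~8.2.

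Your sketch is therefore doing something the paper does not attempt, namely outlining how Lagarias--Odlyzko and Stark actually prove their theorems. As such an outline, parts (a) and (b) are broadly faithful to the cited arguments: the $3+4\cos\theta+\cos 2\theta$ inequality applied to $\log\zeta_L$, the Hadamard product expansion, and the Perron--contour-shift with explicit zero-density input are indeed the main engines in \cite{MR0447191}. One caveat for (b): Lagarias and Odlyzko do not work directly with Artin $L$-functions (whose holomorphy is conjectural), but instead reduce via Deuring's trick to the cyclic subextension $L/E$, where $E$ is the fixed field of an element of $C$, and use abelian (Hecke) $L$-functions there; your sketch elides this. For part (c) your account is more speculative: Stark's argument in the normal case rests on the Aramata--Brauer theorem that $\zeta_L/\zeta$ is entire together with explicit residue and regulator bounds, and the constants $m_L\in\{1,16,4\dgr{L}!\}$ arise from a careful case analysis of how the exceptional zero can propagate through the Brauer decomposition, not simply from ``how many factors can share'' it. None of this is wrong in spirit, but if you were actually asked to supply a proof you would need to consult \cite{MR0342472} directly rather than reconstruct it from these hints.
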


It isn't difficult to deduce that there exists an absolute 
effective constant $c_2$ such that if 
$\max\Br{m_L\log |\disc{L}{\Q}|,m_L|\disc{L}{\Q}|^{1/\dgr{L}}} 
  \le c_2\sqrt{\log x}$,
then 
\[
  \pi(x;L/K,C) = \frac{|C|}{|G|}\li{x}
 + \textstyle
    \Oh[x\exp\br{-c_4 \sqrt{\frac{\log x}{\dgr{L}}}}], 
\]
the implicit constant being absolute and effective.
Combining with Propositions \ref{prp:8.1} and \ref{prp:8.2} gives 
the following result.

\begin{theorem}
 \label{thm:8.5}
Let $L/K$ be a Galois extension of number fields, with Galois group 
$G = \Gal{L}{K}$, and let $C$ be a conjugacy class in $G$. 
Let $\mathscr{P}(L/K,C)$ be the set of prime ideals of $K$ that are 
unramified in $L$ and have Frobenius lying in $C$.
Let $\dgr{L} \defeq [L:\Q]$ and $\disc{L}{\Q}$ respectively denote 
the degree and absolute discriminant of $L$ over $\Q$.
Finally, let $m_L = 1$ if there is a tower of fields 
$\Q \subset K_0 \subset \cdots \subset K_r = L$ with each field 
normal over the preceding one, and $m_L = 4\dgr{L}!$ otherwise. 

There exist effectively computable positive absolute constants 
$c_0$ and $c_1$ such that if
$\max\Br{m_L\log |\disc{L}{\Q}|,m_L|\disc{L}{\Q}|^{1/\dgr{L}}} 
  \le c_0\sqrt{\log x}$,
then
\begin{align}
 \label{eq:8.9}
  \sums[ \norm{K}{\Q}{\p} \le x ][ \p \in \mathscr{P}(L/K,C) ] 
   \frac{1}{ \norm{K}{\Q}{\p} }
 =
  \frac{|C|}{|G|}\log\log x 
 + g(L/K,C) \textstyle
  + \Oh[\exp\br{-c_1 \sqrt{\frac{\log x}{\dgr{L}}}}],
\end{align}
and
\begin{align}
 \label{eq:8.10}
 \begin{split}
 \prods[\norm{K}{\Q}{\p} \le x][\p \in \mathscr{P}(L/K,C)]
  \br{1 -  \frac{1}{ \norm{K}{\Q}{\p} } }^{-1}
 & =
 \G(L/K,C)\cdot (\e^{\gamma}\log x)^{|C|/|G|} \\
 & \hspace{45pt} \times \textstyle
  \Br{1 + \Oh[\exp\br{-c_1 \sqrt{\frac{\log x}{\dgr{L}}}}]}.
 \end{split}
\end{align}
The implicit constants are absolute and effectively computable.
Here, $g(L/K,C)$ and $\G(L/K,C)$ are as in \eqref{eq:8.2} and 
\eqref{eq:8.5} respectively.
\end{theorem}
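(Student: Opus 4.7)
My plan is to deduce Theorem \ref{thm:8.5} from Propositions \ref{prp:8.1} and \ref{prp:8.2} by inserting the effective bound
\[
 E(t;L/K,C)
  \ll t\exp\!\br{-c_4\sqrt{\tfrac{\log t}{\dgr{L}}}}
\]
that is recorded in the paragraph preceding the theorem, via Theorem \ref{thm:8.4} and Stark's bound on the possible Siegel-type zero. The first point to verify is that the hypothesis $\max\{m_L\log|\disc{L}{\Q}|,\,m_L|\disc{L}{\Q}|^{1/\dgr{L}}\}\le c_0\sqrt{\log x}$ imposed at the value $x$ is inherited at every $t\ge x$, since $\sqrt{\log t}$ is nondecreasing; this is what lets us use the effective Chebotarev estimate throughout the tail $[x,\infty)$, not only at the single point $t=x$.

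Starting from \eqref{eq:8.4}, the main term $\tfrac{|C|}{|G|}\log\log x + g(L/K,C)$ is already in place, and I only need to show that the two residual pieces
\[
 x^{-1}E(x;L/K,C)
  \quad\text{and}\quad
   \int_{x}^{\infty} t^{-2}E(t;L/K,C)\,\dd{t}
\]
are each $\Oh[\exp(-c_1\sqrt{\log x/\dgr{L}})]$. The first term is immediate from the effective bound. For the tail integral I would substitute $u=\sqrt{\log t/\dgr{L}}$, so that $t^{-1}\dd{t}=2\dgr{L}\,u\,\dd{u}$, transforming it into
\[
 \int_{\sqrt{\log x/\dgr{L}}}^{\infty}
  2\dgr{L}\, u\, \e^{-c_4 u}\,\dd{u},
\]
which evaluates (integration by parts) to a quantity of size $\dgr{L}\,\sqrt{\log x/\dgr{L}}\,\exp(-c_4\sqrt{\log x/\dgr{L}})$. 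The polynomial factor is absorbed by shrinking $c_4$ to $c_1<c_4$, giving the bound claimed in \eqref{eq:8.9}.

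For the product formula \eqref{eq:8.10}, the plan is identical: insert the same $E(t;L/K,C)$ bound into \eqref{eq:8.6} of Proposition \ref{prp:8.2}, noting that the extra $\Oh[(x\log x)^{-1}]$ error from that proposition is easily dominated by $\exp(-c_1\sqrt{\log x/\dgr{L}})$ under the standing hypothesis. The main term then separates as $\tfrac{|C|}{|G|}(\gamma+\log\log x)+\log\G(L/K,C)$, and exponentiating converts the error into the multiplicative form $1+\Oh[\exp(-c_1\sqrt{\log x/\dgr{L}})]$, since the exponent is $\oh[1]$.

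The only delicate point is keeping the constants absolute and effectively computable: all implicit constants inherited from Theorem \ref{thm:8.4} and Stark's bound are effective, and the substitution argument above introduces no ineffective dependence, so $c_0$ and $c_1$ can indeed be chosen absolutely and effectively. I do not anticipate a real obstacle beyond verifying this book-keeping and confirming that the Lagarias--Odlyzko lower bound $x\ge\exp(10\dgr{L}(\log|\disc{L}{\Q}|)^2)$ is automatic from the hypothesis (it is, since $c_0\sqrt{\log x}\ge m_L\log|\disc{L}{\Q}|\ge\log|\disc{L}{\Q}|$ combined with $m_L|\disc{L}{\Q}|^{1/\dgr{L}}\le c_0\sqrt{\log x}$ forces $\dgr{L}(\log|\disc{L}{\Q}|)^2$ to be small compared with $\log x$ once $c_0$ is chosen suitably).
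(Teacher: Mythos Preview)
Your proposal is correct and follows precisely the route the paper takes: the paper's entire argument for Theorem \ref{thm:8.5} is the paragraph immediately preceding its statement, which derives the effective bound $E(t;L/K,C)\ll t\exp\!\bigl(-c_4\sqrt{\log t/\dgr{L}}\bigr)$ from Theorem \ref{thm:8.4} and Stark's bound, and then says ``Combining with Propositions \ref{prp:8.1} and \ref{prp:8.2} gives the following result.'' You have supplied the details of that combination (monotonicity in $t$, the tail-integral substitution, exponentiation for the product form) that the paper leaves to the reader.
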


Naturally, larger zero-free regions for zeta functions would yield 
better results.
Indeed, Lagarias and Odlyzo \cite[Theorem 1.1]{MR0447191} proved 
that if the generalized Riemann hypothesis for the Dedekind 
zeta-function of $L$ (GRH) holds, then for $x \ge 2$, 
\[
  \pi(x;L/K,C) 
 = \frac{|C|}{|G|} \li{x}
  + \textstyle
     \Oh[\frac{|C|}{|G|} \sqrt{x}\log\br{|\disc{L}{\Q}|x^{\dgr{L}}} 
    + \log |\disc{L}{\Q}|], 
\]
the implicit constant being absolute and effectively computable.
Hence the following conditional result. 

\begin{theorem}
 \label{thm:8.6}
Let $L/K$ be a Galois extension of number fields, with Galois group 
$G = \Gal{L}{K}$, and let $C$ be a conjugacy class in $G$. 
Let $\mathscr{P}(L/K,C)$ be the set of prime ideals of $K$ that are 
unramified in $L$ and have Frobenius lying in $C$.
Let $\dgr{L} \defeq [L:\Q]$ and $\disc{L}{\Q}$ respectively denote 
the degree and absolute discriminant of $L$ over $\Q$.
Let $x \ge 2$.
On GRH for the Dedekind zeta function $\zeta_L(s)$ of $L$, we have
\begin{align}
 \label{eq:8.11}
 \begin{split}
  \sums[ \norm{K}{\Q}{\p} \le x ][ \p \in \mathscr{P}(L/K,C) ] 
   \frac{1}{ \norm{K}{\Q}{\p} }
 & =
  \frac{|C|}{|G|}\log\log x 
  + g(L/K,C) \\
 & \hspace{60pt} + \textstyle
    \Oh[\frac{|C|}{|G|} x^{-1/2}\log\br{|\disc{L}{\Q}|x^{\dgr{L}}} 
        + x^{-1}\log |\disc{L}{\Q}|],
 \end{split}
\end{align}
and
\begin{align}
 \label{eq:8.12}
 \begin{split}
 \sums[ \norm{K}{\Q}{\p} \le x ][ \p \in \mathscr{P}(L/K,C) ]  
   \log\br{1 - \frac{1}{ \norm{K}{\Q}{\p} }}^{-1} 
 & =
 \frac{|C|}{|G|}(\gamma + \log\log x) + \log \G(L/K,C) \\
 & \hspace{10pt} + \textstyle
    \Oh[\frac{|C|}{|G|} x^{-1/2}\log\br{|\disc{L}{\Q}|x^{\dgr{L}}} 
        + x^{-1}\log |\disc{L}{\Q}|].
 \end{split}
\end{align}
The implicit constants are absolute and effectively computable.
Here, $g(L/K,C)$ and $\G(L/K,C)$ are as in \eqref{eq:8.2} and 
\eqref{eq:8.5} respectively.
\end{theorem}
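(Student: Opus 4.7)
The plan is to substitute the Lagarias--Odlyzko conditional bound for $E(t;L/K,C)$ --- stated in the paragraph immediately preceding the theorem --- directly into the exact identities \eqref{eq:8.4} and \eqref{eq:8.6} furnished by Propositions \ref{prp:8.1} and \ref{prp:8.2}. Since those identities already isolate the announced main terms $\frac{|C|}{|G|}\log\log x + g(L/K,C)$ and $\frac{|C|}{|G|}(\gamma+\log\log x) + \log \G(L/K,C)$, the entire task reduces to bounding the boundary term $x^{-1}E(x;L/K,C)$ and the tail integral $\int_x^{\infty} t^{-2} E(t;L/K,C) \dd{t}$.

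The boundary term is, by the cited GRH bound, already in the exact shape of the error on the right-hand side of \eqref{eq:8.11} and \eqref{eq:8.12}, so nothing need be done with it. For the tail integral I would split according to the two summands in the GRH bound. The first summand contributes
\[
 \frac{|C|}{|G|}\int_x^{\infty} t^{-3/2}
   \br{\log|\disc{L}{\Q}| + \dgr{L}\log t} \dd{t}.
\]
Using $\int_x^{\infty} t^{-3/2} \dd{t} = 2x^{-1/2}$ together with a one-line integration by parts giving $\int_x^{\infty} t^{-3/2}\log t \dd{t} = 2x^{-1/2}\log x + 4x^{-1/2}$, this is $\ll \frac{|C|}{|G|} x^{-1/2}\log(|\disc{L}{\Q}| x^{\dgr{L}})$. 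The second summand of the GRH bound contributes exactly $x^{-1}\log|\disc{L}{\Q}|$. Together these match the announced error in \eqref{eq:8.11}.

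For \eqref{eq:8.12} the derivation is identical, except that Proposition \ref{prp:8.2} contributes an additional $\Oh[(x\log x)^{-1}]$ term which must be absorbed into the error. If $L = \Q$ then $|C|/|G| = 1$ and the first GRH summand already contains $x^{-1/2}\log x \gg (x\log x)^{-1}$; otherwise Minkowski's theorem gives $|\disc{L}{\Q}| \ge 2$, so $x^{-1}\log|\disc{L}{\Q}| \gg x^{-1} \gg (x\log x)^{-1}$. In either case the extra term is absorbed without comment. There is no genuine obstacle in the argument --- the substantive work has already been done in Propositions \ref{prp:8.1} and \ref{prp:8.2} (the Tauberian argument of Hardy and Landau lifted to the Chebotarev setting) and in the cited effective Chebotarev estimate. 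Theorem \ref{thm:8.6} is simply an instance of the ``plug in'' adaptability emphasized in the introduction, and any subsequent improvement of the effective Chebotarev estimate would translate, by the same calculation, into a corresponding refinement of the Mertens-type formulae for splitting primes.
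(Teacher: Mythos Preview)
Your proposal is correct and is precisely the approach the paper intends: the text introduces Theorem \ref{thm:8.6} with ``Hence the following conditional result'' immediately after quoting the Lagarias--Odlyzko GRH bound, treating it as a direct substitution into Propositions \ref{prp:8.1} and \ref{prp:8.2} with no further argument supplied. Your explicit bounding of the boundary term and tail integral, and your care in absorbing the $(x\log x)^{-1}$ term from \eqref{eq:8.6}, simply fill in the routine details the paper leaves to the reader.
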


\section{Proof of the main result}
 \label{sec:9}

\begin{proof}[Proof of Proposition \ref{prp:8.1}] 
Recall the background and notation of \S\ref{sec:7}.
For $\sigma > 1$, let $\mathcal{L}(s;L/K,C)$ be as defined in 
\eqref{eq:7.3}, and let
\[
 g(s;L/K,C)
  \defeq 
   \frac{|C|}{|G|}\gamma
   + \log \mathcal{L}(s;L/K,C)
    - \underset{\artin{L}{K}{\p}^{\nu} =~ C}
        {\sum_{\p \nmid \disc{L}{K}} \hspace{5pt} \sum_{\nu \ge 2}}
         \nu^{-1} (\norm{K}{\Q}{\p})^{- \nu s}.
\]
These functions are in fact defined and analytic for 
$\sigma \ge 1$. Consequently, as $s \to 1$, 
$\mathcal{L}(s;L/K,C) \to \mathcal{L}(L/K,C)$ (as in 
\eqref{eq:8.1}) and $g(s;L/K,C) \to g(L/K,C)$.
Recall also that $\mathcal{L}(L/K,C) > 0$.

By \eqref{eq:7.2} --- essentially due to orthogonality relations 
for characters --- for $\sigma > 1$ we have
\[
  \sum_{\p \in \mathscr{P}(L/K,C)} \br{\norm{K}{\Q}{\p}}^{-s}
 = 
     \frac{|C|}{|G|}
      \br{\log \zeta_K(s) - \gamma} +  g(s;L/K,C).  
\]
On the other hand, for $\sigma > 1$, partial summation yields
\begin{align*}
 \sums[\norm{K}{\Q}{\p} \le y]
      [\p \in \mathscr{P}(L/K,C)]
       \br{\norm{K}{\Q}{\p}}^{-\sigma}
 & = y^{-\sigma}\pi(y;L/K,C) \\
 & \hspace{30pt}
  + {\textstyle\frac{|C|}{|G|}}\sigma 
     \int_2^y \frac{\li{t}}{t^{1+\sigma}} \dd{t}
  + \sigma \int_2^y \frac{E(t;L/K,C)}{t^{1+\sigma}} \dd{t},
\end{align*}
and since $\pi(t;L/K,C), |E(t;L/K,C)|, \li{t} < t$, letting $y$ 
tend to infinity yields
\[
 \sum_{\p \in \mathscr{P}(L/K,C)} \br{\norm{K}{\Q}{\p}}^{-\sigma}
 = 
  {\textstyle\frac{|C|}{|G|}}\sigma 
   \int_2^{\infty} \frac{\li{t}}{t^{1+\sigma}} \dd{t}
  + \sigma \int_2^{\infty} \frac{E(t;L/K,C)}{t^{1+\sigma}} \dd{t}.
\]
Integration by parts, followed by the substitution $u = \log t$, 
followed by an application of Lemma \ref{lem:4.1}, followed by 
\eqref{eq:4.2}, yields
\begin{align*}
    {\textstyle\frac{|C|}{|G|}}\sigma 
   \int_{2}^{\infty} \frac{\li{t}}{t^{1+\sigma}} \dd{t} 
 & = 
   {\textstyle\frac{|C|}{|G|}}
   \int_{2}^{\infty} \frac{\dd{t}}{t^{\sigma}\log t}  
  = 
    {\textstyle\frac{|C|}{|G|}}
  \int_{\log 2}^{\infty} u^{-1}\e^{-(\sigma - 1)u} \dd{u} \\
 & = 
   \textstyle
   \frac{|C|}{|G|}
   \br{%
    \log{\br{\frac{1}{\sigma - 1}}} 
   - \gamma 
    - \log\log 2 
     + \oh[1][\sigma \to 1]%
     } \\
  & = 
  \textstyle
   \frac{|C|}{|G|} 
   \br{%
    \log{\zeta(\sigma)} 
   - \gamma 
    - \log\log 2 
     + \oh[1][\sigma \to 1]%
     }.
\end{align*}
Combining all of this, we obtain
\[
 \sigma \int_2^{\infty} \frac{E(t;L/K,C)}{t^{1+\sigma}} \dd{t}
 =  
   g(\sigma;L/K,C) 
  + \frac{|C|}{|G|}
     \br{\log\log 2 
        + \oh[1][\sigma \to 1]}.
\]
Since $\pi(t;L/K,C), \li{t} \ll t(\log t)^{-1}$, we have
$t^{-2}E(t;L/K,C) \ll (t\log t)^{-1}$.
Therefore, by Lemma \ref{lem:4.2}, 
$\int_2^{\infty} t^{-2}E(t;L/K,C)\dd{t}$ converges, and 
\eqref{eq:8.3} follows.

Partial summation yields
\[
 \sums[ \norm{K}{\Q}{\p} \le x ][ \p \in \mathscr{P}(L/K,C) ] 
  \frac{1}{ \norm{K}{\Q}{\p} }
  = 
   x^{-1}\pi(x;L/K,C) 
   + {\textstyle\frac{|C|}{|G|}}
      \int_2^x t^{-2}\li{t} \dd{t} 
     + \int_2^x t^{-2}E(t;L/K,C) \dd{t}.
\]
We have
$x^{-1}\pi(x;L/K,C) 
 = \frac{|C|}{|G|} x^{-1}\li{x} + x^{-1} E(x;L/K,C)$;
integration by parts yields
\[
 {\textstyle\frac{|C|}{|G|}}
      \int_2^x t^{-2}\li{t} \dd{t}
 = 
   \frac{|C|}{|G|} 
    \br{-x^{-1}\li{x} + \log\log x - \log\log 2};
\]
and by \eqref{eq:8.3},
\[
 \int_2^x t^{-2}E(t;L/K,C) \dd{t}
 = 
  g(L/K,C) 
 + \frac{|C|}{|G|}\log\log 2
 - \int_x^{\infty} t^{-2}E(t;L/K,C) \dd{t}.
\]
Combining yields \eqref{eq:8.4}.
\end{proof}

\section{More about $g(L/K,C)$ and $\G(L/K,C)$}
 \label{sec:10}

Suppose that $K = \Q$ and that $C = \{1\}$ is trivial.
Instead abusing notation we can revert to writing $\bar{\chi}(1)$ 
instead of $\bar{\chi}(C)$.
Let us write $\mathcal{L}$ as short-hand for 
$\mathcal{L}(L/\Q;\{1\})$, $L(s,\chi)$ for $L(s,\chi;L/\Q)$, 
$\Delta_L$ for $\disc{L}{\Q}$ (the absolute discriminant of $L$ 
over $\Q$), etc.
Recall that $\dgr{L} = [L : \Q] = |G|$.
By \eqref{eq:7.3} and analyticity, we have
\begin{align*}
 \textstyle
  \mathcal{L}^{\dgr{L}} \cdot
   \prod_{\chi \ne \chi_0} \Lram(1,\chi)^{\bar{\chi}(1)} 
 & = \lim_{s \to 1} \textstyle
      \prod_{p \mid \Delta_L} \br{1 - \frac{1}{p^s}}
       \prod_{\chi \ne \chi_0} L(s,\chi)^{\bar{\chi}(1)} \\
 & = \lim_{s \to 1} \textstyle 
      \zeta(s)^{-1}\prod_{\chi} L(s,\chi)^{\bar{\chi}(1)}.
\end{align*}
Now, it is a standard result that
$\prod_{\chi} L(s,\chi)^{\bar{\chi}(1)} = \zeta_L(s)$ 
(and generally that 
$\prod_{\chi} L(s,\chi;L/K)^{\bar{\chi}(1)} = \zeta_L(s)$.)
The Dedekind zeta-function $\zeta_L(s)$ extends to a meromorphic 
function to the entire plane, with just one pole at $s = 1$, which 
is a simple pole, with residue given by the {\em analytic class 
number formula}.
We have
\[
 \mathcal{L}^{\dgr{L}} \cdot {\textstyle
   \prod_{\chi \ne \chi_0} \Lram(1,\chi)^{\bar{\chi}(1)}}
 = \Res{s=1} \zeta_L(s)
 = \frac{2^{r_1}(2\pi)^{r_2}h_L\Reg{L}}{w_L\sqrt{|\Delta_L|}},
\]
where: $\dgr{L} = r_1 + 2r_2$, $r_1$ and $2r_2$ respectively 
denoting the number of real and complex embeddings of $L$;
$h_L$ is the {\em class number} of $L$;
$\Reg{L}$ is the {\em regulator} of $L$; and
$w_L$ is the number of roots of unity in $L$.

Recalling the definition \eqref{eq:8.2} of $g = g(L/\Q,\{1\})$, we 
have
\begin{align*}
 g 
 & = \textstyle 
      \frac{\gamma}{\dgr{L}} + 
       \frac{1}{\dgr{L}}
        \log\br{\frac{2^{r_1}(2\pi)^{r_2}h_L\Reg{L}}
                     {w_L\sqrt{|\Delta_L|}}} \\
 & \hspace{60pt} - 
    \frac{1}{\dgr{L}}
     \sum_{\chi \ne \chi_0} \bar{\chi}(1)\log \Lram(1,\chi)
    - \underset{ \sigma_{\pp}^{\nu} = 1}
         {\sum_{p \nmid \Delta_L}  \sum_{\nu \ge 2}}
           \frac{1}{\nu p^{\nu}},
\end{align*}
where $\sigma_{\pp}$ denotes the Frobenius element of $\pp$, and 
$\pp$ is any prime in $L$ lying above $p$.
Recalling the definition \eqref{eq:8.5} of $\G = \G(L/\Q,\{1\})$, 
we have
\begin{align*}
 \G
 & = \textstyle
      \br{ \frac{2^{r_1}(2\pi)^{r_2}h_L\Reg{L}}
                {w_L\sqrt{|\Delta_L|}} }^{ 1/\dgr{L} }
        \prod_{\chi \ne \chi_0} 
         \Lram(1,\chi)^{ -\bar{\chi}(1)/\dgr{L}} \\
 & \hspace{60pt} \times \textstyle
    \exp\Bigg\{
             \sum_{ p \in \mathscr{P} }
              \sum_{ \nu \ge 2 } \frac{1}{ \nu p^{\nu} }  
             - \underset{ \sigma_{\pp}^{\nu} = ~ 1 }
              { \sum_{ p \nmid \Delta_L } \sum_{ \nu \ge 2 } }
                 \frac{1}{ \nu p^{\nu} }  
         \Bigg\} \\
 & = \textstyle
      \br{ \frac{2^{r_1}(2\pi)^{r_2}h_L\Reg{L}}
                {w_L\sqrt{|\Delta_L|}} }^{ 1/\dgr{L} }
        \prod_{\chi \ne \chi_0} 
         \Lram(1,\chi)^{ -\bar{\chi}(1)/\dgr{L}} \\
 & \hspace{120pt} \times \textstyle
    \exp\Bigg\{  
             - \underset{ \sigma_{\pp}  \ne 1,~
                           \sigma_{\pp}^{\nu} = 1 }
              { \sum_{ p \nmid \Delta_L } \sum_{ \nu \ge 2 } }
                 \frac{1}{ \nu p^{\nu} }  
         \Bigg\}.
\end{align*}

We leave the interested reader to find interesting interpretations 
of $g(L/K,C)$ and $\G(L/K,C)$ in other cases.


\end{document}